\documentclass[preprint,12pt]{elsarticle}

\usepackage{mathrsfs}
\usepackage{amssymb}
\usepackage{amsmath}
\usepackage{dsfont}

\usepackage{lineno,hyperref} \modulolinenumbers[5]

\biboptions{compress}
\newtheorem{theorem}{Theorem}[section]
\newtheorem{lemma}[theorem]{Lemma}

\newdefinition{definition}[theorem]{Definition}
\newdefinition{remark}[theorem]{Remark}
\newdefinition{example}[theorem]{Example}
\newdefinition{proposition}[theorem]{Proposition}
\newproof{proof}{Proof}

\journal{}









\bibliographystyle{elsarticle-num}

\begin{document}

\begin{frontmatter}

\title{The fundamental theorem of affine geometry in regular $L^0$-modules}

\author[1]{Mingzhi Wu}
\ead{wumz@cug.edu.cn}

\author[2]{Tiexin Guo\corref{mycorrespondingauthor}}
\cortext[mycorrespondingauthor]{Corresponding author}
\ead{tiexinguo@csu.edu.cn}

\author[2]{Long Long}
\ead{longlong@csu.edu.cn}

\address[1]{School of Mathematics and Physics, China University of Geosciences, Wuhan {\rm 430074}, China}
\address[2]{School of Mathematics and Statistics,
Central South University, Changsha {\rm 410083}, China}

\begin{abstract}
 Let $(\Omega,{\mathcal F},P)$ be a probability space and $L^0({\mathcal F})$ the algebra of equivalence classes of real-valued random variables defined on $(\Omega,{\mathcal F},P)$. A left module $M$ over the algebra $L^0({\mathcal F})$(briefly, an $L^0({\mathcal F})$-module) is said to be regular if $x=y$ for any given two elements $x$ and $y$ in $M$ such that there exists a countable partition $\{A_n,n\in \mathbb N\}$ of $\Omega$ to $\mathcal F$ such that ${\tilde I}_{A_n}\cdot x={\tilde I}_{A_n}\cdot y$ for each $n\in \mathbb N$, where $I_{A_n}$ is the characteristic function of $A_n$ and ${\tilde I}_{A_n}$ its equivalence class. The purpose of this paper is to establish the fundamental theorem of affine geometry in regular $L^0({\mathcal F})$-modules: let $V$ and $V^\prime$ be two regular $L^0({\mathcal F})$-modules such that $V$ contains a free $L^0({\mathcal F})$-submodule of rank $2$, if $T:V\to V^\prime$ is stable and invertible and maps each $L^0$-line segment to an $L^0$-line segment, then $T$ must be $L^0$-affine.
\end{abstract}

\begin{keyword}
Regular $L^0$-modules\sep $L^0$-affine mappings \sep stable mappings\sep the fundamental theorem of affine geometry

\MSC[2010] 14R10\sep 51A15 \sep 51N10 \sep 13C13
\end{keyword}

\end{frontmatter}


\section{Introduction and the main result of this paper}

Throughout this paper, $(\Omega,{\mathcal F},P)$ always denotes a given probability space, $L^0({\mathcal F})$ the algebra of equivalence classes of real-valued random variables defined on $(\Omega,{\mathcal F},P)$, which is endowed with the usual algebraic operations on equivalence classes, and $L^0({\mathcal F}, \mathbb R^n)$ the set of equivalence classes of random vectors from $(\Omega,{\mathcal F},P)$ to the $n$-dimensional Euclidean space $\mathbb R^n$, which forms a free $L^0(\mathcal F)$-module of rank $n$ in a natural way. Randomized versions (or random generalizations) of some fundamental results in classical analysis on $\mathbb R$ or $\mathbb R^n$ are both of interest in their own right and have also been the powerful tools for the study of some important topics in probability theory and stochastic finance in various and unexpected manners. Following are several typical examples.

It is well known from \cite{DS} that $(L^0(\mathcal F), \leq)$ is a Dedekind complete lattice under the partial order: $\xi\leq \eta$ iff $\xi^0(\omega)\leq \eta^0(\omega), a.s.$, where $\xi^0$ and $\eta^0$ are respectively arbitrarily chosen representatives of $\xi$ and $\eta$ in $L^0({\mathcal F})$. The fundamental result can be aptly called the randomized version of order-completeness of the set $\mathbb R$ of real numbers, which together with whose equivalent variant (often called the essential supremum and infimum theorem on the set of real-valued random variables) has been frequently used in probability theory and stochastic control \cite{Yan,YPFW}. Another typical example from \cite{KS} is the randomized version of the classical Bolzano-Weierstrass theorem, which states that there exists a random subsequence $\{x_{n_k},k\in \mathbb N\}$ converging a.s. for an arbitrary almost surely (a.s.) bounded sequence $\{x_n,n\in \mathbb N\}$ in $L^0({\mathcal F}, \mathbb R^n)$ (namely, $\sup_n|x_n(\omega)|<+\infty, a.s.$), where $\{n_k,k\in\mathbb N\}$ is a sequence of positive integer-valued random variables defined on $(\Omega,{\mathcal F},P)$ such that $n_k(\omega)<n_{k+1}(\omega)$ for each $\omega\in \Omega$ and each $k\in \mathbb N$, and $x_{n_k}=\sum^\infty_{l=1}\tilde I_{(n_k=l)}\cdot x_l$ with $\tilde I_{(n_k=l)}$ being the equivalence class of the characteristic function $I_{(n_k=l)}$ of $(n_k=l):=\{\omega\in \Omega: n_k(\omega)=l\}$ for any $k$ and $l$ in $\mathbb N$. The use of the fundamental result has considerably simplified the proof of no-arbitrage criteria \cite{KS}, and in particular the randomized version has motivated the fruitful development of the theory of random sequential compactness in random normed modules (briefly, $RN$-modules) \cite{GWXY}.

$RN$-modules are a random generalization of ordinary normed spaces. In fact, random functional analysis is just based on such an idea of randomizing the classical space theory in functional analysis and has been systematically developed as a whole, while random convex analysis, as an important part of random functional analysis, and its applications to conditional convex risk measures, random control and random optimization have been also developed, see \cite{Guo-JFA,GS,Wu,GZZ-SCM,GZWYYZ,GZWG,GZWW} and the references therein for details.

In 2009, Artstein-Avidan and Milman \cite {AM} characterized the fully order preserving (reversing) operators on the set of lower semicontinuous convex functions on $\mathbb R^n$. Further, in 2015 Insem, Reem and Svaiter \cite{IRS} extended the main results in \cite{AM} from $\mathbb R^n$ to real Banach spaces in a nontrivial manner. The results both in \cite {AM} and in \cite{IRS} are based on the fundamental theorem of affine geometry and classical convex analysis. Our future purpose is to generalize the main results in \cite{IRS} from Banach spaces to complete $RN$-modules. Now that random convex analysis has been established as stated above, it remains to generalize the fundamental theorem of affine geometry from linear spaces to left modules over the algebra $L^0({\mathcal F})$ (briefly, $L^0({\mathcal F})$-modules), which is just the goal of this paper.

Propositions \ref{fdag} and \ref{ftag} below survey the finite-dimensional and the infinite-dimensional versions of the fundamental theorem of affine geometry, respectively.

\begin{proposition}\label{fdag}( see \cite{Artin,CP} )
  Let $V$ and $V^\prime$ be two finite dimensional real vector spaces with $dim(V)\geq 2$. If $T: V\to V^\prime$ is invertible and maps each line segment to a line segment, then $T$ is affine.
\end{proposition}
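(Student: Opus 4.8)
The plan is to reduce the statement to showing that, after a harmless translation, $T$ is linear, and then to recover the linear structure purely from incidence data. First I would normalize by replacing $T$ with $x\mapsto T(x)-T(0)$, so that $T(0)=0$; it suffices to prove this normalized map is linear, since an affine map is exactly a linear map followed by a translation. A useful preliminary is that the hypothesis is symmetric in $T$ and $T^{-1}$: because $T$ is a bijection carrying each segment $[a,b]=\{(1-t)a+tb:t\in[0,1]\}$ onto a segment, and because $T$ must send the two endpoints of a segment to the two endpoints of its image — a point $c$ strictly between $a$ and $b$ splits $[a,b]$ into two segments meeting in the single point $\{c\}$, a configuration preserved by the bijection $T$, so interior points go to interior points and endpoints to endpoints — one checks that $T^{-1}$ also carries segments onto segments. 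Writing a line as a nested union of segments with coherent endpoints then shows that both $T$ and $T^{-1}$ map each line onto a line.

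The geometric heart, and the step I expect to be the main obstacle, is to show that $T$ preserves \emph{parallelism} of lines. When $\dim V=2$ this is comparatively clean: two distinct lines are parallel if and only if they are disjoint, and since $T$ and $T^{-1}$ are mutually inverse bijections each mapping lines onto lines, disjointness of a pair of lines is preserved in both directions, so parallel lines go to parallel lines. In higher dimension I would first have to show that $T$ maps each $2$-plane onto a $2$-plane — equivalently that $T$ and $T^{-1}$ preserve coplanarity — so that the parallelism argument can be run inside a fixed plane and its image. This incidence-geometry bookkeeping is precisely what makes the classical theorem nontrivial.

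Granting parallelism-preservation, I would recover additivity via parallelograms. For linearly independent $x,y$, the vector $x+y$ is the unique intersection of the line through $x$ parallel to $\mathbb{R}y$ with the line through $y$ parallel to $\mathbb{R}x$. Applying $T$, which fixes $0$, sends $\mathbb{R}x,\mathbb{R}y$ to $\mathbb{R}T(x),\mathbb{R}T(y)$, and preserves parallelism, carries this configuration to the analogous parallelogram on $T(x),T(y)$, whence $T(x+y)=T(x)+T(y)$; the linearly dependent case is then handled by introducing an auxiliary independent direction, available since $\dim V\geq 2$. Thus $T$ is additive. Restricting to a line $\mathbb{R}x$ and writing $T(\lambda x)=\sigma(\lambda)T(x)$, additivity forces $\sigma$ to be additive, while a standard multiplicative construction (realizing products of scalars through intersecting lines) forces $\sigma$ to be multiplicative and independent of the chosen $x$, so $\sigma$ is a field endomorphism of $\mathbb{R}$.

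Finally I would invoke the rigidity of $\mathbb{R}$: any field endomorphism $\sigma$ of $\mathbb{R}$ fixes $\mathbb{Q}$ and is monotone, because it carries squares to squares and hence nonnegative numbers to nonnegative numbers, and a monotone self-map of $\mathbb{R}$ fixing the dense set $\mathbb{Q}$ must be the identity. Hence $\sigma=\mathrm{id}$, so $T(\lambda x)=\lambda T(x)$, and together with additivity this makes the normalized $T$ linear. Undoing the translation shows the original $T$ is affine, completing the proof.
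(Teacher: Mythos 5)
First, a point of reference: the paper never proves this proposition --- it is quoted from \cite{Artin,CP} --- so your proposal can only be measured against the classical argument and against the specialization to $L^0(\mathcal F)=\mathbb R$ of the paper's own machinery (Propositions~\ref{line-line-exact}, \ref{line-line2}, \ref{line-seg} and Lemma~\ref{id}). Your outline is the classical Artin-style route, and its first genuine gap is the passage from segments to lines. Your endpoint argument is fine: it correctly yields $T([a,b])=[T(a),T(b)]$ and hence that $T^{-1}$ also maps segments onto segments. But the next inference --- ``writing a line as a nested union of segments with coherent endpoints shows that $T$ maps each line onto a line'' --- does not follow. The image of a line is then only a nested increasing union of segments, i.e.\ a \emph{convex subset} of a line, and such a union need not be a line even when each segment's endpoints are interior to the next: $\bigcup_n\left[-1+2^{-n},\,1-2^{-n}\right]=(-1,1)$. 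If the image of a line could be a bounded piece of a line, your ``parallel iff disjoint'' step in dimension $2$ collapses. The gap is repairable in two ways: (i) note that $T(\ell)$ is a convex subset of a line $m$, apply the same nested-union observation to $T^{-1}$ to see $T^{-1}(m)$ is a convex subset of some line containing the full line $\ell$, hence equals $\ell$, so $T(\ell)=m$; or (ii) argue as the paper does in Proposition~\ref{line-seg}: for $k\geq 2$ one has $T(b)=(1-\mu)T(a)+\mu T(a+k(b-a))$ with $\mu\in(0,1]$ ($\mu\neq 0$ by injectivity), solve for $T(a+k(b-a))$ as an affine combination of $T(a)$ and $T(b)$, then capture an arbitrary $a+\lambda(b-a)$ inside $[a-k(b-a),a+k(b-a)]$.

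The second gap is that for $\dim V\geq 3$ you explicitly defer the coplanarity step (``$T$ maps each $2$-plane onto a $2$-plane''), and the multiplicativity of $\sigma$ is invoked as ``a standard construction''; since you yourself identify these as the nontrivial incidence-geometric content, the proposal as written is a complete proof only for $\dim V=2$, modulo the repair above. It is worth knowing that the paper's route avoids both of these issues entirely, and does so in a way that specializes nicely to the real case: in Step 1 of Proposition~\ref{line-line2}, additivity $S(x+y)=S(x)+S(y)$ for independent $x,y$ is obtained \emph{without any parallelism} --- one writes $S(x+y)=aS(x)+bS(y)$ (since $x+y$ lies on the line through $2x$ and $2y$) and derives a contradiction with injectivity from $a\neq 1$, using only that the line $l(x,x+y)$ maps onto $l(S(x),S(x+y))$; no planes, no parallels, hence no separate treatment of higher dimensions. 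Your final step, on the other hand, is exactly right and is the same rigidity phenomenon as the paper's Lemma~\ref{id}: an additive, multiplicative, unital self-map of the scalars is monotone because it sends squares to squares, fixes $\mathbb Q$, and is therefore the identity. So the beginning and the end of your proof align with the cited/generalized arguments; it is the middle (lines-onto-lines, and parallelism plus coplanarity) that either needs the patches above or is better replaced by the intersection-coordinate argument of Proposition~\ref{line-line2}.
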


\begin{proposition}\label{ftag}( see \cite{IRS} )
 Let $V$ and $V^\prime$ be any two real vector spaces with $dim(V)\geq 2$. If $T: V\to V^\prime$ is invertible and maps each line segment to a line segment, then $T$ is affine.
\end{proposition}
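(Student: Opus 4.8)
The plan is to reduce the statement to proving that, after the harmless normalization $T(0)=0$ (replace $T$ by $x\mapsto T(x)-T(0)$, which is again invertible and sends segments to segments), the map $T$ is $\mathbb{R}$-linear; restoring the translation then shows that the original $T$ is affine. The heart of the matter is to upgrade the segment-preserving hypothesis into the assertion that $T$ is a collineation respecting the full affine incidence structure, and then to read linearity off that structure. I would organize this in three stages: (i) $T$ maps each segment $[x,y]=\{\lambda x+(1-\lambda)y:\lambda\in[0,1]\}$ \emph{onto} the whole segment $[T(x),T(y)]$, hence preserves collinearity and betweenness and carries affine lines bijectively to affine lines while preserving parallelism; (ii) $T$ therefore maps parallelograms to parallelograms and midpoints to midpoints, which yields additivity; (iii) the action of $T$ on scalars along a line is an order-preserving additive bijection of $\mathbb{R}$, hence the identity, giving $\mathbb{R}$-homogeneity.

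For stage (i), the step I expect to be the main obstacle is \emph{endpoint preservation}: I must show that the two endpoints of $[x,y]$ go to the two endpoints of the image segment. Writing $T([x,y])=[p,q]$, I would first check that for any $m$ strictly between $x$ and $y$ the point $T(m)$ is strictly interior to $[p,q]$: indeed $T([x,m])$ and $T([m,y])$ are nondegenerate segments whose union is $[p,q]$ and whose intersection is the single point $T(m)$ (by injectivity), so $T(m)$ must be the interior junction point. Consequently $T$ maps the open segment into the open segment, and since $T$ is a bijection of $[x,y]$ onto $[p,q]$ the two leftover endpoints satisfy $\{T(x),T(y)\}=\{p,q\}$. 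Granting this, $T([x,y])=[T(x),T(y)]$; applying $T^{-1}$ to an arbitrary segment shows that $T^{-1}$ is \emph{also} segment-preserving, which symmetrizes the problem. Thus $T$ and $T^{-1}$ both preserve collinearity and betweenness, $T$ is a collineation mapping affine lines bijectively onto affine lines, and—mapping planes to planes and preserving disjointness of coplanar lines—it preserves parallelism.

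For stage (ii), I would use $\dim V\geq 2$ to guarantee nondegenerate parallelograms: four points $a,b,c,d$ form a parallelogram exactly when the relevant pairs of sides are parallel, equivalently $a+c=b+d$, and then its two diagonals meet in the common midpoint $\tfrac12(a+c)=\tfrac12(b+d)$. To locate the midpoint of an arbitrary pair $x,y$ I take them as opposite vertices $a=x$, $c=y$ and complete the parallelogram with an auxiliary vertex off the line $xy$ (available because $\dim V\geq 2$). Since $T$ preserves lines, incidence, and parallelism, it maps this parallelogram to a parallelogram and the diagonal intersection to the diagonal intersection, giving $T\!\big(\tfrac{x+y}{2}\big)=\tfrac12\big(T(x)+T(y)\big)$ for all $x,y$. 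Setting $y=0$ yields $T(x/2)=\tfrac12 T(x)$, and feeding this back into the midpoint identity gives $T(x+y)=T(x)+T(y)$, so $T$ is additive; additivity then forces $\mathbb{Q}$-homogeneity in the usual way.

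Finally, for stage (iii), fix $x_0\neq 0$; since $T(x_0)\neq 0$ and $T$ carries the line $\mathbb{R}x_0$ bijectively onto $\mathbb{R}\,T(x_0)$, there is a bijection $\sigma\colon\mathbb{R}\to\mathbb{R}$ with $T(\lambda x_0)=\sigma(\lambda)\,T(x_0)$, $\sigma(0)=0$, $\sigma(1)=1$. Additivity of $T$ makes $\sigma$ additive, and the betweenness-preservation of stage (i) forces $\sigma$ to be strictly monotone; an additive monotone bijection of $\mathbb{R}$ is linear, so $\sigma(\lambda)=\lambda$. This is precisely the point where the real scalar field is essential: the only field automorphism of $\mathbb{R}$ is the identity. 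Hence $T(\lambda x_0)=\lambda\,T(x_0)$ for every scalar and every $x_0$, so $T$ is $\mathbb{R}$-homogeneous; together with additivity this makes $T$ linear, and undoing the translation shows the original $T$ is affine. I would close by noting that the passage from the finite-dimensional Proposition~\ref{fdag} to arbitrary (possibly infinite-dimensional) spaces needs no new idea: every finite configuration invoked above lives in an affine subspace of dimension at most three, so all incidence, parallelism, and midpoint arguments are carried out inside suitable $2$- or $3$-dimensional affine subspaces, where $\dim V\geq 2$ supplies the requisite nondegenerate figures.
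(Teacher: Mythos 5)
Your proposal cannot be checked against an internal proof, because the paper does not prove Proposition~\ref{ftag}: it is imported from \cite{IRS} without argument. Judged on its own, your three-stage proof is the classical Artin-style route and it is sound. The step that genuinely needed care---endpoint preservation, i.e.\ that $T([x,y])$ is not merely \emph{some} segment but exactly $[T(x),T(y)]$---you handle correctly: writing $T([x,y])=[p,q]$, the two nondegenerate subsegments $T([x,m])$ and $T([m,y])$ cover $[p,q]$ and meet only in $T(m)$, and if $T(m)$ were an endpoint then one of them would contain both $p$ and $q$, hence equal $[p,q]$, forcing the other to be the singleton $\{T(m)\}$ and contradicting injectivity; bijectivity of $T$ on $[x,y]$ then pins $\{T(x),T(y)\}=\{p,q\}$. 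Note that the paper sidesteps exactly this point by fiat: its definition of ``maps each $L^0$-line segment to an $L^0$-line segment'' is $T([x,y])=[T(x),T(y)]$, with endpoints matched by definition, whereas the hypothesis of Proposition~\ref{ftag} requires your lemma, so identifying it as the main obstacle was the right call. The only places where you assert rather than prove are that a bijection preserving lines in both directions maps planes onto planes and hence preserves parallelism of coplanar lines; these are standard collineation facts (express the plane through noncollinear $a,b,c$ as the union of lines joining $a$ to points of the line through $b,c$, treating the one exceptional parallel from a second base point) and do not constitute a gap, though in a full write-up they deserve the two or three lines it takes.

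It is worth comparing your architecture with the proof the paper gives of its own generalization, Theorem~\ref{main}, since the two run in parallel: your normalization $T(0)=0$ is the paper's $S=T-T(0)$; your stage~(i) upgrade from segments to lines corresponds to Proposition~\ref{line-seg}, where instead of nested segments the paper first places the integer points $x+ky$ on $l(T(x),T(x+y))$ and then glues a general scalar $\lambda$ over the countable partition $[k-1\le|\lambda|<k]$; and your stage~(iii) corresponds to Lemma~\ref{id}---you use additivity plus order-preservation to force the induced scalar bijection to be $\mathrm{id}_{\mathbb R}$, while the paper uses additivity, multiplicativity and locality to force $\phi=\mathrm{id}$ on $L^0({\mathcal F})$; in both cases this is precisely where rigidity of the scalars enters. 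The genuine divergence is your stage~(ii): you obtain additivity geometrically, via parallelograms and diagonal midpoints, which is what obliges you to establish plane- and parallelism-preservation; the paper's Steps 1--3 in the proof of Proposition~\ref{line-line2} instead extract additivity directly from line-preservation and injectivity (e.g.\ $x+y\in l(2x,2y)$ gives $S(x+y)=aS(x)+bS(y)$, and the line $l(x,x+y)$ rules out $a\neq1$), never invoking parallelism. That alternative also works verbatim in your real-vector-space setting and would let you delete the collineation machinery entirely; conversely, your midpoint argument has no direct $L^0$-analogue because, as the paper stresses, an $L^0$-line is not determined by two of its points.
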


To state our central result, let us first introduce some terminologies as follows.

\begin{definition}
 Let $V$ and $V^\prime$ be any two $L^0({\mathcal F})$-modules. A mapping $T:V\to V^\prime$ is said to be:\\
 (1) $L^0$-linear if $T$ is a module homomorphism from $V$ to $V^\prime$.\\
 (2) $L^0$-affine if there exist an $L^0$-linear mapping $S$ from $V$ to $V^\prime$ and $b\in V^\prime$ such that $T(x)=S(x)+b$ for any $x\in V$.\\
 (3) Stable if $T(\tilde I_Ax+\tilde I_{A^c}y)=\tilde I_A T(x)+\tilde I_{A^c}T(y)$ for any $x$ and $y$ in $V$ and any $A$ in $\mathcal F$, where $\tilde I_A$ the equivalence class of the characteristic function $I_A$ of $A$ (in Section 2 of this paper, we will check that $T$ is stable iff $T$ has the local property, namely $\tilde I_AT(x)=\tilde I_AT(\tilde I_A x)$ for any $x\in V$ and $A\in \mathcal F$).\\
Besides, $T$ is said to map each $L^0$-line segment to an $L^0$-line segment if $T([x,y])=[T(x),T(y)]$ for any two different elements $x$ and $y$ in $V$, where $[x,y]=\{\lambda x+(1-\lambda)y:\lambda\in L^0({\mathcal F}) \mbox {~and~}0\leq \lambda\leq 1\}$, called the $L^0$-line segment linking $x$ and $y$. Further, we say that $V$ contains a free $L^0({\mathcal F})$-submodule of rank $2$ if $V$ contains two $L^0({\mathcal F})$-independent elements $x$ and $y$, which means $\xi=\eta=0$ for any two $\xi$ and $\eta$ in $L^0({\mathcal F})$ such that $\xi x+\eta y=0$.
\end{definition}

When we generalize Proposition \ref{ftag} stated above to $L^0({\mathcal F})$-modules, we require that the $L^0({\mathcal F})$-modules have the following regular property so that we can introduce the notion of a support for an element in them, which will play a crucial role in establishing our main result.

\begin{definition}\label{regular}(see \cite{Guo-JFA})
An $L^0({\mathcal F})$-module $V$ is said to be regular if it always holds that $x$ and $y$ are equal for any given $x$ and $y$ in $V$ such that there exists a countable partition $\{A_n,n\in \mathbb N\}$ of $\Omega$ to $\mathcal F$ (namely each $A_n\in \mathcal F$, and $A_n\cap A_m=\emptyset$ for any $n\neq m$ and $\cup_n A_n=\Omega$) such that ${\tilde I}_{A_n}\cdot x={\tilde I}_{A_n}\cdot y$ for each $n\in \mathbb N$.
\end{definition}

As mentioned in \cite{Guo-JFA}, the regular requirement as in Definition \ref{regular} is merely a slight restrictive condition since $RN$-modules and random locally convex modules as the central framework of random functional analysis are all regular!

We can now state the central result of this paper as follows.

\begin{theorem}\label{main}
Let $V$ and $V^\prime$ be any two regular $L^0({\mathcal F})$-modules such that $V$ contains a free $L^0({\mathcal F})$-submodule of rank $2$. If $T:V\to V^\prime$ is stable, invertible and maps each $L^0$-line segment to an $L^0$-line segment, then $T$ is $L^0$-affine.
\end{theorem}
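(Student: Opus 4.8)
The plan is to reproduce the architecture of the classical proof of Proposition~\ref{ftag} while replacing every incidence and betweenness argument by its stable, support-localized $L^0(\mathcal F)$-counterpart, using regularity to glue local conclusions into global ones. First I would normalize: put $b=T(0)$ and $S(x)=T(x)-b$. Since translation by $-b$ is $L^0$-affine and carries each $L^0$-line segment to an $L^0$-line segment, $S$ is again stable, invertible and segment-preserving, and $S(0)=0$; it therefore suffices to prove that $S$ is $L^0$-linear, so from now on I assume $T(0)=0$ and aim to show $T(\xi x+\eta y)=\xi T(x)+\eta T(y)$ for all scalars and all $x,y$.

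Next, for fixed $x\neq y$ I would analyze $T$ on $[x,y]$. Because $T$ is injective and $T([x,y])=[T(x),T(y)]$, there is a map $\phi=\phi_{x,y}$ on $\{\lambda\in L^0(\mathcal F):0\leq\lambda\leq 1\}$ with
\[
T(\lambda x+(1-\lambda)y)=\phi(\lambda)\,T(x)+(1-\phi(\lambda))\,T(y).
\]
Here the support of $x-y$ must be tracked: on the complement of $\mathrm{supp}(x-y)$ the segment degenerates to a point and there is nothing to prove, so by stability and regularity I reduce to the part of $\Omega$ where $x-y$ is genuinely nonzero. Stability forces $\phi$ to be local, betweenness-preservation forces it to be monotone, and after partitioning $\Omega$ according to whether $\phi$ fixes or swaps the endpoints and regluing, I arrange $\phi(0)=0$, $\phi(1)=1$; the problem becomes $\phi=\mathrm{id}$. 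The geometric heart is midpoint preservation $\phi(\tfrac12)=\tfrac12$, which I would obtain from the affine characterization of the midpoint as the intersection of the diagonals of a parallelogram $x,\,z,\,y,\,x+y-z$, the auxiliary independent direction $z$ being supplied by the free $L^0(\mathcal F)$-submodule of rank $2$. Iterating the midpoint identity fixes every dyadic constant $k/2^n$, and stability then upgrades this to every dyadic simple parameter; finally, for arbitrary $\lambda$ the dyadic simple functions $d_n=2^{-n}\lfloor 2^n\lambda\rfloor\uparrow\lambda$ and their shifts give $d_n=\phi(d_n)\leq\phi(\lambda)\leq\phi(d_n+2^{-n})=d_n+2^{-n}$, so Dedekind completeness of $(L^0(\mathcal F),\leq)$ and monotonicity force $\phi(\lambda)=\lambda$. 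Hence $T(\lambda x+(1-\lambda)y)=\lambda T(x)+(1-\lambda)T(y)$ for all admissible $\lambda,x,y$.

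From this convex-combination identity and $T(0)=0$ I would extract $L^0$-linearity. Taking $y=0$ gives $T(\lambda x)=\lambda T(x)$ for $0\leq\lambda\leq 1$; applying the identity with $\lambda=\tfrac12$ to $x+y$ yields additivity $T(x+y)=T(x)+T(y)$, whence $T(-x)=-T(x)$. To upgrade $[0,1]$-homogeneity to full $L^0(\mathcal F)$-homogeneity I would use stability to write an arbitrary scalar $\xi$ as $\xi=\xi^+-\xi^-$ with disjoint supports and to partition $\Omega$ according to the dyadic size of $\xi$, then glue the countably many local homogeneity identities by regularity. This produces the required $L^0$-linear $S$ with $T(x)=S(x)+b$.

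The \textbf{main obstacle} I anticipate is twofold, and both halves are genuinely $L^0$-phenomena rather than transcriptions of the real case. First, in the parallelogram construction the direction $x-y$ need not extend to a free rank-$2$ pair on all of $\Omega$ even though a rank-$2$ free submodule exists globally; I must therefore measurably decompose $\Omega$ into pieces on which an independent companion direction is available, run the midpoint argument on each piece, and reassemble by stability, all while ensuring no piece silently degenerates. Second, the passage from ``$\phi$ fixes every dyadic simple parameter'' to ``$\phi$ fixes every element of $\{0\leq\lambda\leq 1\}$'' is exactly where the regularity hypothesis and the Dedekind completeness of $(L^0(\mathcal F),\leq)$ are indispensable, since an $L^0$-parameter is an arbitrary equivalence class of measurable functions and cannot be reached by a single sequence of rationals the way a real number can; this step is the randomized replacement for the classical fact that the only automorphism of $\mathbb R$ is the identity.
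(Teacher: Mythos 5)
Your proposal has a genuine gap at exactly the point you call the ``geometric heart.'' You claim midpoint preservation $\phi(\tfrac12)=\tfrac12$ from ``the affine characterization of the midpoint as the intersection of the diagonals of a parallelogram $x,\,z,\,y,\,x+y-z$.'' The intersection point of the two diagonals is indeed the midpoint, and since $T$ is a bijection mapping segments to segments, $T$ does carry this intersection point to the unique intersection point of the image segments $[T(x),T(y)]$ and $[T(z),T(x+y-z)]$. But nothing tells you that this image intersection point is the \emph{midpoint} of $[T(x),T(y)]$: that would require the four image points to form a parallelogram, i.e.\ $T(x+y-z)=T(x)+T(y)-T(z)$, which is precisely the additivity you are trying to prove. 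In the classical proof of Proposition~\ref{ftag} this circularity is broken by first proving that the map carries parallel lines to parallel lines (a nontrivial step involving full lines and planes), and your proposal never establishes parallelism preservation --- indeed it never even upgrades segment preservation to line preservation, and it is unclear what ``parallel'' should even mean for $L^0$-lines, which can coincide, intersect, or be ``parallel'' only on part of $\Omega$ after localization. Without this, the diagonal argument yields no information about $\phi(\tfrac12)$, and the whole dyadic/monotonicity machinery downstream (which is otherwise sound and close in spirit to the paper's Lemma~\ref{id}) has nothing to iterate.

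For comparison, the paper avoids parallelism entirely and in a different order: it first shows (Proposition~\ref{line-seg}) that a stable bijection preserving $L^0$-segments preserves $L^0$-lines, by pushing integer multiples $x+ky$ into $l(T(x),T(x+y))$ and gluing over the sets $[\,k-1\leq|\lambda|<k\,]$ via regularity; then, with full lines available, additivity for $L^0$-independent $x,y$ is proved algebraically (Step 1 of Proposition~\ref{line-line2}): one writes $S(x+y)=aS(x)+bS(y)$ using the lines $l(2x,2y)$, $l(0,x)$, $l(0,y)$ --- note $2x\notin[0,x]$, so segments alone do not suffice --- and rules out $a\neq 1$ by producing from the line $l(x,x+y)$ a point whose image has vanishing $S(x)$-component on $[a\neq 1]$, contradicting injectivity and $L^0$-independence; scalar homogeneity then comes from the endomorphism rigidity Lemma~\ref{id}. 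If you want to salvage your plan, you must either prove line preservation first and then find a substitute for the parallelogram step (as the paper does), or find an incidence characterization of the midpoint that is manifestly invariant under segment-preserving bijections; the latter does not exist, since the midpoint is not an invariant of the incidence structure of finitely many segments without some parallelism or ratio input.
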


When $(\Omega,{\mathcal F},P)$ is trivial, namely ${\mathcal F}=\{\Omega,\emptyset\}$, Theorem \ref{main} immediately reduces to Proposition \ref{ftag} since in the case an $L^0({\mathcal F})$-module is automatically a real vector space and thus regular, and a mapping between two real vector spaces is also automatically stable.

Compared with Proposition \ref{ftag}, in the general case the unique additional condition is stability of $T$ and an example will be further provided to show that stability of $T$ is essential in Theorem \ref{main}. On the other hand, the algebra $L^0({\mathcal F})$, unlike the real number field with dimension one, may be infinite-dimensional so that the $L^0({\mathcal F})$-modules have more complicated algebraic structure than ordinary real linear spaces, for example, an $L^0$-line $l(x,y):=\{\lambda x+(1-\lambda)y: \lambda\in L^0(\mathcal F)\}$ can not be uniquely determined by any two points $u$ and $v$ on it, which makes the proof of Theorem \ref{main} much more involved than the proofs of Propositions \ref{fdag} and \ref{ftag}.

Since the fundamental theorem of affine geometry is of fundamental importance, it has been generalized from linear spaces to free modules over some kinds of rings \cite{LK}, where some other types of conditions are assumed. However, our Theorem \ref{main} is not a special case of the fundamental theorem of affine geometry given in \cite{LK}, at least we do not require that $V$ and $V'$ in our Theorem \ref{main} be free, the related discussions will also be given in Section 3 of this paper. Besides, it should also be pointed out that owing to the peculiarity of $L^0({\mathcal F})$ our Theorem \ref{main} is not merely more concise and in particular meets the needs of our forthcoming work.

\par
The remainder of this paper is organized as follows. Section 2 is devoted to some preliminaries and Section 3 to the proof of Theorem \ref{main} while some other main results of independent interest are also given.

\section{Preliminaries}

For each $A\in {\mathcal F}$, the equivalence class of $A$ refers to $\tilde A=\{B\in {\mathcal F}: P(A\triangle B)=0\}$, and we often also write $I_{\tilde A}$ for ${\tilde I}_A$. For any $A$ and $B$ in $\mathcal F$, $\tilde A\subset \tilde B$ means that $P(A\setminus B)=0$. Given $\xi\in L^0({\mathcal F})$, let $\xi^0(\cdot)$ be an arbitrarily chosen representative of $\xi$. We write $[\xi\neq 0]$ for the equivalence class of the measurable set $\{\omega\in\Omega: \xi^0(\omega)\neq 0\}$. The statement ``$\xi\neq 0$ on $\Omega$'' means that $\xi^0(\omega)\neq 0$, $P$-a.s., in other words, $\xi$ is an invertible element of the algebra $L^0({\mathcal F})$. The generalized inverse $\xi^{-1}$ of $\xi$ is defined as the equivalent class of the random variable $(\xi^0)^{-1}$ defined by
$$(\xi^0)^{-1}(\omega)=\left\{
                         \begin{array}{ll}
                           (\xi^0(\omega))^{-1}, &\hbox {if $\xi^0(\omega)\neq 0$},  \\
                           0, & \hbox{if $\xi^0(\omega)= 0$}.
                         \end{array}
                       \right.
$$
It is clear that $\xi \cdot \xi^{-1}=I_{[\xi\neq 0]}$. Some other notations such as $[\xi=0]$ and ``$\xi>0$ on $\Omega$'' are understood in a similar way.

Now we introduce the notion of a support for an element in a regular $L^0({\mathcal F})$-module. Since the notion depends on what has been called the randomized version of order-completeness of the set $\mathbb R$ of real numbers in Section 1, we restate the result as follows for the reader's convenience.

\begin{proposition}\label{esssup}(see \cite{DS})
$(L^0(\mathcal F), \leq)$ is a Dedekind complete lattice, that is to say, every nonempty set $H$ in $L^0(\mathcal F)$ with an upper bound (a lower bound) has a supremum (accordingly, an infimum), denoted by $\vee H$ (accordingly, $\wedge H$), and if in addition $H$ is directed upward (accordingly, downward), then there exists a nondecreasing (accordingly, nonincreasing) sequence $\{\xi_n,n\in \mathbb N\}$ in $H$ such that $\vee\{\xi_n,n\in \mathbb N\}=\vee H$ (accordingly, $\wedge \{\xi_n,n\in \mathbb N\}=\wedge H$).
\end{proposition}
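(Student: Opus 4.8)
The plan is to prove the essential-supremum half; the infimum half then follows by applying it to $-H:=\{-\xi:\xi\in H\}$, since $\wedge H=-\vee(-H)$ and downward-directedness of $H$ is exactly upward-directedness of $-H$. Fix a nonempty $H\subseteq L^0(\mathcal F)$ with an upper bound $\eta\in L^0(\mathcal F)$. The first step is a normalization: fix a bounded, strictly increasing, continuous function $\psi:\mathbb R\to(-1,1)$, say $\psi(t)=\tfrac{2}{\pi}\arctan t$, and for $\xi\in L^0(\mathcal F)$ set $m(\xi):=\int_\Omega\psi(\xi^0)\,dP\in(-1,1)$, where $\xi^0$ is any representative of $\xi$; this value is independent of the representative and finite because $\psi$ is bounded. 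Since $\psi$ is strictly increasing, $\xi\leq\zeta$ implies $m(\xi)\leq m(\zeta)$, and moreover if $\xi\leq\zeta$ with $m(\xi)=m(\zeta)$ then $\psi(\zeta^0)-\psi(\xi^0)\geq 0$ has zero integral, hence $\psi(\xi^0)=\psi(\zeta^0)$ a.s., and injectivity of $\psi$ gives $\xi=\zeta$. This strict-monotonicity-to-equality implication is the engine of the whole argument.

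Next I would run the classical maximize-the-expectation construction over countable subfamilies. For any nonempty countable $C\subseteq H$ the pointwise a.s.\ supremum $g_C:=\sup\{\xi:\xi\in C\}$ is a well-defined element of $L^0(\mathcal F)$, being bounded above by $\eta$ and below by any fixed member of $C$, hence finite a.s. Put $s:=\sup\{m(g_C):C\subseteq H\text{ nonempty countable}\}$, which is finite since $\psi$ is bounded. Choose a sequence $\{C_k\}$ of such subfamilies with $m(g_{C_k})\to s$, and set $C^*:=\bigcup_k C_k$, still countable, with $g^*:=g_{C^*}$. Because $C_k\subseteq C^*$ forces $g_{C_k}\leq g^*$, we get $m(g^*)\geq s$; the reverse inequality holds by definition of $s$, so $m(g^*)=s$.

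The key step is to verify that $g^*$ is the least upper bound of $H$. For an arbitrary $\xi\in H$, the family $C^*\cup\{\xi\}$ is nonempty countable with supremum $g^*\vee\xi\geq g^*$, so $m(g^*\vee\xi)\leq s=m(g^*)$ while monotonicity gives $m(g^*\vee\xi)\geq m(g^*)$. Equality of these values together with $g^*\vee\xi\geq g^*$ forces $g^*\vee\xi=g^*$ by the implication isolated in the first step, i.e.\ $\xi\leq g^*$; thus $g^*$ is an upper bound of $H$. If $h$ is any other upper bound, then every member of $C^*$ is dominated by $h$, whence $g^*=\sup\{\xi:\xi\in C^*\}\leq h$. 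Therefore $g^*=\vee H$, realized as the a.s.\ supremum of the countable subfamily $C^*=\{\xi_n:n\in\mathbb N\}$.

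Finally, under the directedness hypothesis I would upgrade $C^*$ to a monotone sequence. Assuming $H$ is directed upward, enumerate $C^*=\{\xi_n\}$ and build $\{\zeta_k\}\subseteq H$ recursively: set $\zeta_1=\xi_1$, and given $\zeta_k$, use upward-directedness of $H$ to choose $\zeta_{k+1}\in H$ with $\zeta_{k+1}\geq\zeta_k$ and $\zeta_{k+1}\geq\xi_{k+1}$ (both $\zeta_k$ and $\xi_{k+1}$ lie in $H$). Then $\zeta_k\geq\xi_j$ for all $j\leq k$, so $\sup_k\zeta_k\geq\sup_n\xi_n=\vee H$, while $\zeta_k\in H$ gives $\sup_k\zeta_k\leq\vee H$; hence $\vee\{\zeta_k\}=\vee H$ with $\{\zeta_k\}$ nondecreasing, as required. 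The \textbf{main obstacle} is the expectation-maximization step coupled with the strict-monotonicity implication: everything else is bookkeeping, and specializing to $\mathcal F=\{\Omega,\emptyset\}$ merely recovers the ordinary order-completeness of $\mathbb R$.
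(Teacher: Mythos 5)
Your proposal is correct; note that the paper gives no proof of this proposition at all, citing it from Dunford--Schwartz \cite{DS}, and your argument is precisely the classical essential-supremum construction behind that citation: normalize with a bounded strictly increasing $\psi$, maximize $\int\psi(\cdot)\,dP$ over countable subfamilies, extract the equality $g^\ast\vee\xi=g^\ast$ from the strict-monotonicity of the integral, and upgrade to a nondecreasing sequence in $H$ by pairwise upward-directedness (the infimum half following by negation). All steps, including the a.s.\ finiteness of $g_C$ and the verification that the a.s.\ countable supremum is the lattice supremum, are handled correctly, so nothing further is needed.
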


Let $V$ be an $L^0({\mathcal F})$-module and $x$ an element of $V$. Denote ${\mathcal A}=\{B\in {\mathcal F}: \tilde I_B x=0\}$. Since $\emptyset\in {\mathcal A}$ and $\tilde I_B\leq 1$ for any $B\in \mathcal F$, $\{\tilde I_B: B\in {\mathcal A}\}$ has a supremum. For any $B_1$ and $B_2$ in ${\mathcal A}$, we have $\tilde I_{B_1\cup B_2}x=(\tilde I_{B_1}+\tilde I_{B_2\setminus B_1})x=\tilde I_{B_1} x+\tilde I_{B_2\setminus B_1} x=0$, thus $B_1\cup B_2$ belongs to ${\mathcal A}$, namely ${\mathcal A}$ is directed upward. Therefore according to Proposition \ref{esssup}, there exists a non-decreasing sequence $\{B_n,n\in \mathbb N\}$ in ${\mathcal A}$ such that $\vee \{\tilde I_{B_n},n\in \mathbb N\}=\vee \{\tilde I_B: B\in \mathcal A\}$. Let $A=\cup^{\infty}_{n=1} B_n$, then we see that $\vee \{\tilde I_B: B\in \mathcal A\}=\vee \{\tilde I_{B_n},n\in \mathbb N\}=\tilde I_A$. Set $A_0=A^c$, $A_1=B_1$ and $A_n=B_n\setminus B_{n-1}$ for $n\geq 2$, then $\{A_n: n=0,1,2,\dots\}$ is a countable partition of $\Omega$ to $\mathcal F$ such that $\tilde I_{A_n}\cdot \tilde I_A x=0$ for every $A_n$. However we cannot deduce that $I_A x=0$ if no other conditions are assumed. Thus there is a minor mistake in the proof of Theorem 1.1 of \cite{GS}. If we add the requirement that the $L^0({\mathcal F})$-module $V$ be regular, then we can conclude that $I_A x=0$ since $\tilde I_{A_n}\cdot 0=0$ and $\tilde I_{A_n}\cdot \tilde I_A x=0$ for every $A_n$. In such a case, we write $[x=0]$ for the equivalence class $\tilde A$, and $[x\neq 0]$ for the equivalence class $\tilde {A^c}$. Besides, $A^c$ is said to be a support for $x$, and when $P(A^c)=1$ we call $x$ having full support.

\begin{proposition}
 Let $V$ be a regular $L^0({\mathcal F})$-module and $x$ an element of $V$ with full support. Then the following statements are true:\\
(1) If $\xi x=0$ for some $\xi\in L^0({\mathcal F})$, then $\xi=0$. \\
(2) If $\xi\in L^0({\mathcal F})$ is invertible, then $\xi x$ has full support.\\
(3) If $\xi$ and $\eta$ are elements in $L^0({\mathcal F})$ such that $\xi x=\eta x$, then $\xi=\eta$.\\
(4) If $y$ and $z$ are two $L^0({\mathcal F})$-independent elements in $V$, then both $y$ and $z$ have full support.
\end{proposition}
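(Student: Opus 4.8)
The plan is to reduce all four parts to a single observation about full support, and then dispatch them with the generalized inverse introduced in the preliminaries. The key fact I would record first is this: if $x$ has full support, then $\tilde I_B x = 0$ forces $P(B)=0$ (equivalently $\tilde I_B = 0$). This is immediate from the support construction preceding the proposition. Indeed, whenever $\tilde I_B x = 0$ the set $B$ lies in $\mathcal A = \{B\in\mathcal F: \tilde I_B x = 0\}$, so $\tilde I_B \le \vee\{\tilde I_{B'}: B'\in\mathcal A\} = \tilde I_A = I_{[x=0]}$; full support means $P(A)=0$, hence $\tilde I_B \le 0$ and so $\tilde I_B = 0$. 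All of (1)--(3) feed on this fact, and (4) on its contrapositive.

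For (1), given $\xi x = 0$ I would multiply through by the generalized inverse $\xi^{-1}$. Using the identity $\xi\cdot\xi^{-1} = I_{[\xi\ne 0]}$ recorded in the preliminaries together with commutativity of $L^0(\mathcal F)$, this yields $I_{[\xi\ne 0]}x = \xi^{-1}(\xi x) = 0$. The key fact then forces $P([\xi\ne 0])=0$, i.e. $\xi = 0$. Part (3) is then an immediate corollary: from $\xi x = \eta x$ we get $(\xi-\eta)x = 0$, so $\xi-\eta = 0$ by (1).

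For (2), I want to show $[\xi x = 0] = \tilde\emptyset$ when $\xi$ is invertible, which by the support construction applied to $\xi x$ amounts to showing that every $B$ with $\tilde I_B(\xi x) = 0$ is null. Since $\tilde I_B(\xi x) = \xi(\tilde I_B x)$ and $\xi$ is invertible (so $I_{[\xi\ne 0]} = 1$ and $\xi^{-1}\xi = 1$), multiplying by $\xi^{-1}$ gives $\tilde I_B x = \xi^{-1}\xi(\tilde I_B x) = 0$, whence $P(B)=0$ by the key fact; thus $\xi x$ has full support. For (4) I would argue by contraposition: if $y$ failed to have full support there would be $B$ with $P(B)>0$ and $\tilde I_B y = 0$, and then $\tilde I_B\, y + 0\cdot z = 0$ with the nonzero coefficient $\tilde I_B$, contradicting the $L^0(\mathcal F)$-independence of $y$ and $z$; the same argument applies verbatim to $z$.

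The computations are all short, and I do not anticipate a serious obstacle: the proposition is essentially a bookkeeping consequence of the support construction together with regularity. The only point demanding care is to keep the generalized-inverse identity $\xi\cdot\xi^{-1} = I_{[\xi\ne 0]}$ (which is $1$ \emph{only} when $\xi$ is invertible) straight, and to invoke full support at exactly the moment a product of the form $\tilde I_B x$ is seen to vanish.
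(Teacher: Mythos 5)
Your proof is correct and follows essentially the same route as the paper: the generalized-inverse identity $\xi\cdot\xi^{-1}=I_{[\xi\neq 0]}$ for (1), deducing (3) from (1), invertibility of $\xi$ for (2), and the definition of $L^0(\mathcal F)$-independence for (4). The only differences are cosmetic --- you isolate the ``full support kills indicators'' fact explicitly and prove (4) by contraposition, where the paper tests independence directly with the coefficients $I_{[y=0]}$ and $I_{[z=0]}$.
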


\begin{proof}
(1) Since $0=\xi^{-1}\cdot \xi x=I_{[\xi\neq 0]}x$ and $x$ has full support, we obtain $I_{[\xi\neq 0]}=0$, namely $\xi=0$.

(2) If $\tilde I_A\cdot \xi x=\tilde I_A\xi \cdot x=0$ for some $A\in \mathcal F$, then it follows from (1) that $\tilde I_A\xi=0$, thus $\tilde I_A=0$, meaning $\xi x$ has full support.

(3) It immediately follows from (1).

(4) Since $I_{[y=0]}y+I_{[z=0]}z=0+0=0$, and $y$ and $z$ are $L^0({\mathcal F})$-independent, we obtain that $I_{[y=0]}=I_{[z=0]}=0$, namely both $y$ and $z$ have full support.

This completes the proof.
\end{proof}

By adding the requirement that the $L^0({\mathcal F})$-module be regular, we give a modification of Theorem 1.1 of \cite{GS} as follows, which will be used in the proof of Proposition \ref{line-line}.

\begin{proposition}\label{atfgm}
Let $V$ be a finitely generated regular $L^0({\mathcal F})$-module. Then there exists a finite partition $\{A_0, A_1, \dots, A_n\}$ of $\Omega$ to $\mathcal F$ such that $\tilde I_{A_i}V$ is a free module of rank $i$ over the algebra $\tilde I_{A_i}L^0({\mathcal F})$ for each $i\in \{0, 1, 2, \dots,n\}$ satisfying $P(A_i) >0$, in which case $V=\bigoplus^n_{i=0}\tilde I_{A_i}V$ and each such $A_i$ is unique in the sense of almost sure equality.
\end{proposition}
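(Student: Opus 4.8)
The plan is to argue by induction on the number $m$ of generators of $V$, using essential suprema (Proposition \ref{esssup}) together with the regularity of $V$ to carry out the gluing operations that were missing in the original argument of \cite{GS}. Fix generators $x_1,\dots,x_m$ of $V$, so that every localization $\tilde I_B V$ is generated over $\tilde I_B L^0(\mathcal F)$ by $\tilde I_B x_1,\dots,\tilde I_B x_m$. The base case $m=0$ is trivial ($V=\{0\}$, $A_0=\Omega$), so I assume the statement for all regular modules admitting fewer than $m$ generators.

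First I would isolate the locus of maximal rank. Consider the family $\mathcal B=\{B\in\mathcal F:\ \tilde I_Bx_1,\dots,\tilde I_Bx_m\ \text{are}\ L^0\text{-independent over}\ \tilde I_BL^0(\mathcal F)\}$. This family is closed downward and directed upward, since a relation on a union of two sets splits into relations on each piece, so by Proposition \ref{esssup} there is a nondecreasing sequence in $\mathcal B$ whose characteristic functions have supremum $\tilde I_A$ for some $A\in\mathcal F$; regularity then lets me concatenate the corresponding independence statements to conclude that $A\in\mathcal B$ and that $A$ is the largest such set up to a.s. equality. On $A$ the elements $\tilde I_Ax_i$ are both independent and spanning, hence form a basis, and $\tilde I_AV$ is free of rank $m$.

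Next I would show that on $A^c$ one generator can be eliminated. By maximality of $A$, no positive-measure subset of $A^c$ lies in $\mathcal B$; equivalently, the essential supremum $F$ of the supports $\bigvee_i[\eta_i\neq 0]$ taken over all relations $\sum_i\eta_ix_i=0$ supported in $A^c$ must satisfy $F=A^c$ a.s., for otherwise the generators would be independent on the positive-measure set $A^c\setminus F$. Since these supports are again directed upward, I can select an increasing sequence of relations and concatenate their coefficients over the associated countable partition of $A^c$ to produce a single relation $\sum_i\eta_ix_i=0$ with $\bigvee_i[\eta_i\neq 0]=A^c$; this concatenation and the resulting identity are exactly where the regularity of $V$ is indispensable. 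Setting $D_i=[\eta_i\neq 0]\setminus\bigcup_{j<i}[\eta_j\neq 0]$ gives a finite partition of $A^c$ on each piece of which $\tilde I_{D_i}\eta_i$ is invertible, so $\tilde I_{D_i}x_i$ is a combination of the remaining generators over $\tilde I_{D_i}L^0(\mathcal F)$ and $\tilde I_{D_i}V$ is generated by $m-1$ elements.

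Finally I would apply the induction hypothesis to each regular module $\tilde I_{D_i}V$, refine the resulting finite partitions, and then merge all pieces of a common rank into a single set $A_i$, using regularity once more to glue the local bases into a basis of $\tilde I_{A_i}V$; this yields the desired finite partition $\{A_0,\dots,A_n\}$ with $n\le m$, and the decomposition $V=\bigoplus_{i=0}^n\tilde I_{A_i}V$ is then immediate. For uniqueness, if $\{A_i'\}$ were another such partition, then on $A_i\cap A_j'$ with $i\neq j$ and positive measure the module would be simultaneously free of rank $i$ and of rank $j$ over the nonzero commutative ring $\tilde I_{A_i\cap A_j'}L^0(\mathcal F)$, contradicting invariance of basis number; hence each $A_i$ is determined a.s. I expect the main obstacle to be the construction of the single full-support relation on $A^c$: controlling the essential supremum of the relation supports and concatenating the witnessing relations into one global relation is precisely the step that fails without regularity, and it must be executed carefully so that the final partition stays finite.
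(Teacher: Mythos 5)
Your proof is correct, but the comparison here is unusual: the paper never actually proves this proposition. It is stated as a modification of Theorem 1.1 of \cite{GS}, and the paper's Section 2 only explains the single point where regularity repairs the flaw in the proof from \cite{GS} (namely, that knowing $\tilde I_{A_n}\cdot\tilde I_A x=0$ on each member of a countable partition forces $\tilde I_A x=0$ only when $V$ is regular). Your argument is thus a self-contained reconstruction of the corrected proof, and it follows the natural route that the paper implicitly endorses: induction on the number of generators, essential suprema (Proposition \ref{esssup}) to locate the maximal set $A$ on which the generators are $L^0$-independent, construction of a relation whose support fills $A^c$ so that one generator can be eliminated locally, the induction hypothesis on each $\tilde I_{D_i}V$, merging pieces of equal rank, and uniqueness from the fact that the nonzero commutative ring $\tilde I_{A_i\cap A_j^\prime}L^0(\mathcal F)$ has invariant basis number (the paper's own Remark invokes \cite{Cohn} for precisely this property of $L^0(\mathcal F)$). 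Moreover, the one step where you invoke regularity of $V$ in an essential way --- concatenating the countably many witnessing relations over the partition of $A^c$ into a single relation $\sum_i\eta_i x_i=0$ --- is exactly the analogue of the step the paper identifies as the gap in \cite{GS}, so your proof supplies precisely the missing justification the paper alludes to. Three minor inaccuracies, none fatal: (i) passing independence from each $B_n$ to $A=\cup_n B_n$ needs only that the coefficients $\eta_i\tilde I_{B_n}$ vanish for all $n$, which is a fact about $L^0(\mathcal F)$ itself (countable additivity), not regularity of $V$; (ii) gluing finitely many local bases of a common rank into one basis (take the sums $u_j+u_j^\prime$ of basis vectors on disjoint pieces) likewise needs no regularity, only finiteness of the merge; (iii) the induction hypothesis is cleanest if applied to $\tilde I_{D_i}V$ regarded as an $L^0(\mathcal F)$-module --- it is a submodule of $V$, hence regular, and is generated by the $m-1$ elements $\tilde I_{D_i}x_j$, $j\neq i$ --- which avoids having to identify $\tilde I_{D_i}L^0(\mathcal F)$ with the $L^0$ of a trace probability space.
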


Next, we study the relations among $L^0$-affine property, stability and local property of a mapping.

\begin{proposition}\label{aff-lc}
 Let $V$ and $V^\prime$ be two $L^0({\mathcal F})$-modules and $T$ a mapping from $V$ to $V^\prime$. Then the following statements are true: \\
 (1). $T$ is $L^0$-affine iff $T[\lambda x+(1-\lambda) y]=\lambda T(x)+(1-\lambda)T(y)$ for any $x$ and $y$ in $V$ and any $\lambda$ in $L^0({\mathcal F})$.\\
 (2). If $T$ is $L^0$-affine, then $T$ must be stable.\\
 (3). If $T$ is stable and $T(0)=0$, then $T({\tilde I}_A x)={\tilde I}_A T(x)$ for any $x\in V$ and $A\in {\mathcal F}$.\\
 (4). $T$ is stable iff $T$ has the local property.\\
 (5). If $T$ is stable and bijective, then $T^{-1}$ is also stable.\\
\end{proposition}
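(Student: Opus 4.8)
The plan is to prove the five statements of Proposition~\ref{aff-lc} in the order listed, since each later part naturally leans on the definitions and occasionally on the earlier parts. Throughout I would keep in mind that the defining property of stability is $T(\tilde I_A x + \tilde I_{A^c} y) = \tilde I_A T(x) + \tilde I_{A^c} T(y)$, and that the $L^0$-affine condition is equivalent (by part (1)) to preserving all $L^0$-combinations $\lambda x + (1-\lambda)y$ for $\lambda \in L^0(\mathcal F)$, not merely those with $0 \le \lambda \le 1$.

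For part (1), the nontrivial direction is to show that the combination-preserving condition forces $T$ to be $L^0$-affine. I would set $b := T(0)$ and define $S(x) := T(x) - b$, then verify that $S$ is $L^0$-linear. Taking $y = 0$ in the hypothesis gives $T(\lambda x) = \lambda T(x) + (1-\lambda)b$, which rearranges to $S(\lambda x) = \lambda S(x)$, yielding homogeneity; additivity $S(x+y) = S(x) + S(y)$ should come from applying the hypothesis with $\lambda = \tfrac12$ to get $T(\tfrac12 x + \tfrac12 y)$, combined with the homogeneity just established to clear the factor $\tfrac12$. The converse direction is a direct substitution. For part (2), assuming $T(x) = S(x) + b$ with $S$ being $L^0$-linear, I would simply compute $T(\tilde I_A x + \tilde I_{A^c} y)$ and use linearity of $S$ together with $\tilde I_A + \tilde I_{A^c} = 1$ and $\tilde I_A b + \tilde I_{A^c} b = b$ to match the stability identity.

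Part (3) follows by taking $y = 0$ in the stability definition, since $T(\tilde I_A x) = T(\tilde I_A x + \tilde I_{A^c} \cdot 0) = \tilde I_A T(x) + \tilde I_{A^c} T(0) = \tilde I_A T(x)$ using $T(0) = 0$. For part (4), the forward implication is immediate from part (3) applied appropriately, but since the local property as stated is $\tilde I_A T(x) = \tilde I_A T(\tilde I_A x)$ without assuming $T(0)=0$, I would instead derive it directly: multiplying the stability identity (with $y=x$) does not quite suffice, so I would multiply the general stability identity by $\tilde I_A$ to obtain $\tilde I_A T(\tilde I_A x + \tilde I_{A^c} y) = \tilde I_A T(x)$ and then choose $y$ suitably; conversely, assuming the local property, I would write $T(\tilde I_A x + \tilde I_{A^c} y) = \tilde I_A T(\tilde I_A x + \tilde I_{A^c} y) + \tilde I_{A^c} T(\tilde I_A x + \tilde I_{A^c} y)$ and apply the local property to each term, noting that $\tilde I_A(\tilde I_A x + \tilde I_{A^c} y) = \tilde I_A x$ and similarly for $A^c$.

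The main obstacle I anticipate is part (5): showing $T^{-1}$ is stable when $T$ is stable and bijective. Here the difficulty is that we must verify $T^{-1}(\tilde I_A u + \tilde I_{A^c} v) = \tilde I_A T^{-1}(u) + \tilde I_{A^c} T^{-1}(v)$ for arbitrary $u, v \in V'$, yet we only control $T$ in the forward direction. My strategy is to set $x := T^{-1}(u)$ and $y := T^{-1}(v)$, so that $u = T(x)$ and $v = T(y)$, and then apply stability of $T$ to the element $\tilde I_A x + \tilde I_{A^c} y$, obtaining $T(\tilde I_A x + \tilde I_{A^c} y) = \tilde I_A T(x) + \tilde I_{A^c} T(y) = \tilde I_A u + \tilde I_{A^c} v$. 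Applying $T^{-1}$ to both ends then gives $\tilde I_A x + \tilde I_{A^c} y = T^{-1}(\tilde I_A u + \tilde I_{A^c} v)$, which is exactly the claim after substituting back $x = T^{-1}(u)$ and $y = T^{-1}(v)$. This argument is clean precisely because stability is a statement about how $T$ respects the $\tilde I_A$-gluing, and bijectivity lets us transport that structure across the inverse; the only care needed is to ensure every element is expressed as a genuine image under $T$ before invoking stability.
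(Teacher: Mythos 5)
Your proposal is correct in all five parts, and parts (1)--(4) follow essentially the same route as the paper: the same reduction to $S(x)=T(x)-T(0)$, homogeneity from the $y=0$ substitution, additivity by clearing the factor $\tfrac12$ (the paper clears it by applying the hypothesis to $2x$ and $2y$ via $T(2x)=2T(x)-T(0)$, while you clear it through the already-established homogeneity of $S$ --- an equivalent computation), and the same splitting $T(\tilde I_A x+\tilde I_{A^c}y)=\tilde I_A T(\cdots)+\tilde I_{A^c}T(\cdots)$ for the converse of (4). Where you genuinely diverge is part (5). The paper proves it indirectly: it computes the two forward images $T[\tilde I_A T^{-1}(y)]$ and $T[\tilde I_A T^{-1}(\tilde I_A y)]$, shows both equal $\tilde I_A y+\tilde I_{A^c}T(0)$, invokes injectivity to conclude that $T^{-1}$ has the \emph{local property}, and then appeals to part (4) to upgrade this to stability. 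You instead verify the stability identity for $T^{-1}$ directly: writing $u=T(x)$, $v=T(y)$ with $x=T^{-1}(u)$, $y=T^{-1}(v)$, stability of $T$ gives $T(\tilde I_A x+\tilde I_{A^c}y)=\tilde I_A u+\tilde I_{A^c}v$, and applying $T^{-1}$ yields $T^{-1}(\tilde I_A u+\tilde I_{A^c}v)=\tilde I_A T^{-1}(u)+\tilde I_{A^c}T^{-1}(v)$. Your argument is shorter, does not route through part (4) at all, and makes transparent that surjectivity is what lets every pair $(u,v)$ be pulled back; the paper's detour through the local property, on the other hand, fits its general pattern of treating the local property as the basic workhorse (it is reused throughout Section 3), so either proof is acceptable and yours is arguably the cleaner one for this isolated statement.
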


\begin{proof}
(1). Define $S: V\to V^\prime$ by $S(x)=T(x)-T(0), \forall x\in V$. Then $T$ is $L^0$-affine iff $S$ is $L^0$-linear.

If $T$ is $L^0$-affine, then $S$ is $L^0$-linear, thus for any $x$ and $y$ in $V$ and any $\lambda$ in $L^0({\mathcal F})$, we have
\begin{eqnarray*}
T[\lambda x+(1-\lambda) y]&=& S[\lambda x+(1-\lambda) y]+T(0)\\
                          &=& \lambda S(x)+(1-\lambda)S(y)+T(0)\\
                          &=& \lambda [S(x)+T(0)]+(1-\lambda)[S(y)+T(0)]\\
                          &=& \lambda T(x)+(1-\lambda)T(y).
\end{eqnarray*}

Conversely, if $T[\lambda x+(1-\lambda) y]=\lambda T(x)+(1-\lambda)T(y)$ for any $x$ and $y$ in $V$ and any $\lambda$ in $L^0({\mathcal F})$, then for any $x\in V$ and $\xi\in L^0({\mathcal F})$, we have
$$T(\xi x)=T[\xi x+(1-\xi)\cdot 0]=\xi T(x)+(1-\xi)T(0),$$
thus $$ S(\xi x)=T(\xi x)-T(0)=\xi[T(x)-T(0)]=\xi S(x),$$
and for any $x$ and $y$ in $V$, we have \begin{eqnarray*}
          T(x+y)&=& \frac{1}{2}T(2x)+\frac{1}{2}T(2y)  \\
                &=& \frac{1}{2}[T(2x-0)+T(2y-0)]\\
                &=& \frac{1}{2}[2T(x)-T(0)+2T(y)-T(0)]\\
                &=& T(x)+T(y)-T(0),
        \end{eqnarray*}
thus $$S(x+y)=T(x+y)-T(0)=T(x)-T(0)+T(y)-T(0)=S(x)+S(y).$$
Therefore $S$ is $L^0$-linear, equivalently $T$ is $L^0$-affine.

(2). If $T$ is $L^0$-affine, then by (1), $T(\tilde I_A x+\tilde I_{A^c}y)=\tilde I_A T(x)+\tilde I_{A^c}T(y)$ for any $x$ and $y$ in $V$ and any $A$ in $\mathcal F$, namely $T$ is stable.

(3). If $T$ is stable and $T(0)=0$, then for any $x\in V$ and $A\in {\mathcal F}$, we have $$T({\tilde I}_A x)=T({\tilde I}_A x+{\tilde I}_{A^c}\cdot 0)={\tilde I}_AT(x)+ {\tilde I}_{A^c}T(0)={\tilde I}_AT(x).$$

(4). If $T$ is stable, then for any $x\in V$ and $A\in {\mathcal F}$, $T({\tilde I}_A x)=T({\tilde I}_A x+{\tilde I}_{A^c}\cdot 0)={\tilde I}_AT(x)+ {\tilde I}_{A^c}T(0)$, thus ${\tilde I}_AT({\tilde I}_A x)={\tilde I}_AT(x)$, namely $T$ has the local property.

 Conversely, if $T$ has the local property, then for any $x$ and $y$ in $V$ and any $A$ in ${\mathcal F}$,
\begin{eqnarray*}
T({\tilde I}_A x+{\tilde I}_{A^c}y)&=& {\tilde I}_AT({\tilde I}_A x+{\tilde I}_{A^c}y)+ {\tilde I}_{A^c}T({\tilde I}_A x+{\tilde I}_{A^c}y) \\
                                   &=& {\tilde I}_AT[{\tilde I}_A({\tilde I}_A x+{\tilde I}_{A^c}y)]+{\tilde I}_{A^c}T[{\tilde I}_{A^c}({\tilde I}_A x+{\tilde I}_{A^c}y)]\\
                       & =& {\tilde I}_AT({\tilde I}_A x)+ {\tilde I}_{A^c}T({\tilde I}_{A^c}y) \\
                       & =& {\tilde I}_AT(x)+{\tilde I}_{A^c}T(y),
\end{eqnarray*}
thus $T$ is stable.

(5). If $T$ is stable and bijective, then for any $y\in V^\prime$ and $A\in {\mathcal F}$, we have
\begin{align*}
  T[{\tilde I}_A T^{-1}(y)] & =T[{\tilde I}_A T^{-1}(y)+{\tilde I}_{A^c}\cdot 0] \\
   & ={\tilde I}_A T[T^{-1}(y)]+{\tilde I}_{A^c}T(0) \\
   & ={\tilde I}_Ay+{\tilde I}_{A^c}T(0),
\end{align*}
and
\begin{align*}
  T[{\tilde I}_AT^{-1}({\tilde I}_Ay)] & =T[{\tilde I}_A T^{-1}({\tilde I}_Ay)+{\tilde I}_{A^c}\cdot 0] \\
   & ={\tilde I}_AT[T^{-1}({\tilde I}_Ay)]+{\tilde I}_{A^c}T(0) \\
   & ={\tilde I}_Ay+{\tilde I}_{A^c}T(0),
\end{align*}
thus $T[{\tilde I}_A T^{-1}(y)]=T[{\tilde I}_AT^{-1}({\tilde I}_Ay)]$. Since $T$ is injective, we obtain that ${\tilde I}_A T^{-1}(y)={\tilde I}_AT^{-1}({\tilde I}_Ay)$, namely $T^{-1}$ has the local property. It follows from (4) that $T^{-1}$ is stable.

This completes the proof.
\end{proof}

Lemma \ref{id} below states that the identity mapping is the unique endomorphism on $L^0({\mathcal F})$ with the local property. It will be used in the proof of Proposition \ref{line-line2}.

As usual, we always regard $\mathbb R$ as a subset of $L^0(\mathcal{F})$ by identifying every element $r\in \mathbb R$ with the equivalence class of the constant function with value $r$.

\begin{lemma}\label{id}
Let $\phi: L^0({\mathcal F}) \to L^0({\mathcal F})$ be a mapping such that:\\
(1). $\phi$ is local;\\
(2). $\phi(\xi+\eta)=\phi(\xi)+\phi(\eta), \forall \xi,\eta\in L^0({\mathcal F})$;\\
(3). $\phi(\xi\eta)=\phi(\xi)\phi(\eta),\forall \xi,\eta\in L^0({\mathcal F})$;\\
(4). $\phi(1)=1$.\\
Then $\phi$ is the identity mapping, namely $\phi(\xi)=\xi, \forall \xi\in L^0({\mathcal F})$.
\end{lemma}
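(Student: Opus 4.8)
The plan is to show that $\phi$ fixes every element of $L^0(\mathcal F)$ by first establishing it on the rational constants, then on indicator classes, then using order-preservation to pin down arbitrary elements, and finally invoking the regularity-style gluing implicit in the local property. First I would use properties (2)--(4) to recover the behaviour of $\phi$ on the embedded copy of $\mathbb Q$: from $\phi(\xi+\eta)=\phi(\xi)+\phi(\eta)$ I get $\phi(0)=0$ and additivity over $\mathbb Z$, so $\phi(n)=n$ for every integer $n$ via $\phi(1)=1$; combining this with multiplicativity (3) yields $\phi(q)=q$ for all $q\in\mathbb Q$, since $\phi(q)\phi(n)=\phi(qn)=\phi(m)=m$ when $q=m/n$. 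The key algebraic tool will be that $\phi$ preserves the multiplicative structure, so in particular $\phi(\xi^2)=\phi(\xi)^2$, which forces $\phi$ to send squares to squares.

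Next I would establish that $\phi$ is \emph{order-preserving}. The crucial observation is that in $L^0(\mathcal F)$ an element $\xi$ is a perfect square in the lattice sense, i.e. $\xi\geq 0$, if and only if $\xi=\eta^2$ for some $\eta\in L^0(\mathcal F)$ (take $\eta=\sqrt{\xi}$, which exists as an equivalence class since nonnegative representatives have square roots). Because $\phi(\eta^2)=\phi(\eta)^2\geq 0$, I conclude that $\xi\geq 0$ implies $\phi(\xi)\geq 0$, and hence by additivity $\xi\leq\zeta$ implies $\phi(\xi)\leq\phi(\zeta)$. With monotonicity in hand, I would squeeze an arbitrary $\xi\in L^0(\mathcal F)$: using the local property together with a countable partition adapted to dyadic level sets of a representative of $\xi$, I can sandwich $\xi$ between rational-constant-valued (i.e. simple) functions from below and above that differ by at most $\tilde I_\Omega\cdot 2^{-k}$ on each piece. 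Applying $\phi$, invoking $\phi(q)=q$ on each piece via the local property, and letting $k\to\infty$ would force $\phi(\xi)=\xi$.

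The role of the local property (1) is exactly to let me work piecewise: on the set where a simple approximant equals a fixed rational $q$, the local property gives $\tilde I_{[\,\cdot\,]}\phi(\xi)=\tilde I_{[\,\cdot\,]}\phi(\tilde I_{[\,\cdot\,]}\xi)$, which lets me transfer the already-known value of $\phi$ on rational constants to the restriction of $\xi$. I expect the main obstacle to be making this sandwiching argument rigorous: $L^0(\mathcal F)$-elements need not be bounded, so I must partition $\Omega$ into countably many pieces on which $\xi$ is bounded (e.g. $[\,n\le\xi< n+1\,]$) before approximating by simple functions, and then argue that locality lets me reassemble the countably many piecewise conclusions $\tilde I_{A_n}\phi(\xi)=\tilde I_{A_n}\xi$ into the global identity $\phi(\xi)=\xi$. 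This last reassembly is precisely where one needs that equalities holding on every member of a countable partition force global equality; here it follows from the local property applied to $L^0(\mathcal F)$ itself, since $L^0(\mathcal F)$ is a regular module over itself. The delicate point is controlling the approximation uniformly enough on each bounded piece that the order-sandwich genuinely closes in the limit; once monotonicity and the rational-constant values are secured, the remaining estimates are routine.
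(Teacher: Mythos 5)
Your proposal is correct and takes essentially the same route as the paper's own proof: fix the rationals using (2)--(4), obtain monotonicity from $\phi(\eta^2)=\phi(\eta)^2\geq 0$ via square roots of nonnegative elements, squeeze bounded elements between rational-valued simple elements (whose values are pinned down by locality), and finally glue over the countable partition $[n-1\leq|\xi|<n]$ using locality together with the regularity of $L^0(\mathcal F)$ over itself. The only cosmetic differences are that the paper gets $\phi(q)=q$ on $\mathbb Q$ from additivity alone and passes through real constants before simple elements, while you use multiplicativity and dyadic rational approximants directly; both close the sandwich the same way.
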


\begin{proof}
From (2), $\phi(0)=2\phi(0)$, thus $\phi(0)=0$. Since $\phi(\xi-\xi)=\phi(0)=\phi(\xi)+\phi(-\xi)$, we obtain that $\phi(-\xi)=-\phi(\xi)$ for every $\xi\in L^0({\mathcal F})$. Since $\phi(1)=1$, it is easy to deduce that $\phi(p)=p$ for any integer $p$ and further $\phi(r)=r$ for any rational number $r$.

Let $\xi$ and $\eta$ be two elements in $L^0({\mathcal F})$ with $\xi \geq \eta$, then from (3) we obtain that $\phi(\xi-\eta)=\phi(\sqrt {\xi-\eta}\sqrt {\xi-\eta})=\phi(\sqrt {\xi-\eta})\phi(\sqrt {\xi-\eta})\geq 0$. Since $\phi(\xi-\eta)=\phi(\xi)+\phi(-\eta)=\phi(\xi)-\phi(\eta)$, it follows that $\phi(\xi)\geq \phi(\eta)$, therefore $\phi$ is monotonically increasing. Since $\phi(r)=r$ for any rational number $r$, from the monotonicity of $\phi$ we obtain that $\phi(r)=r$ for any real number $r$.

For any simple element $q=\sum^d_{i=1}r_i{\tilde I}_{A_i}$ in $L^0({\mathcal F})$ (where each $r_i$ is a real number and $\{A_i:i=1,\dots, d\}$ is a finite partition of $\Omega$ to $\mathcal F$), by $(4)$ of Proposition \ref{aff-lc} we have $\phi(q)=\sum^d_{i=1}{\tilde I}_{A_i}r_i=q$.

Given any $\xi\in L^0({\mathcal F})$ satisfying that there exists a positive $r\in \mathbb R$ such that $|\xi|\leq r$. Let $q_-=\sum^d_{i=1}r_i{\tilde I}_{A_i}$ and $q_+=\sum^k_{j=1}t_j{\tilde I}_{B_j}$ be any two simple elements in $L^0({\mathcal F})$ such that $q_-\leq \xi\leq q_+$, then using the monotonicity of $\phi$, we have $q_-=\phi(q_-)\leq \phi(\xi)\leq \phi(q_+)=q_+$. Taking all such possible $q_-$ and $q_+$, we thus obtain that $\phi(\xi)=\xi$.

For a general $\xi\in L^0({\mathcal F})$, let $A_n=[n-1\leq |\xi| <n]$ for each $n\in\mathbb N$, then $\phi(I_{A_n}\xi)=I_{A_n}\xi$ since $|I_{A_n}\xi|\leq n$. From (4) and (3) of Proposition \ref{aff-lc} we get $\phi(I_{A_n}\xi)=I_{A_n}\phi(\xi)$, again by noting that $\sum^\infty_{n=1}I_{A_n}=1$ we can thus obtain that $\phi(\xi)=\sum^\infty_{n=1}I_{A_n}\phi(\xi)=\sum^\infty_{n=1}\phi(I_{A_n}\xi)=\sum^\infty_{n=1}I_{A_n} \xi=\xi$.

This completes the proof.
\end{proof}

\section{Main results and their proofs}

To prove Theorem \ref{main}, we first need to establish the $L^0$-affineness of a mapping that maps $L^0$-lines to $L^0$-lines.

\begin{definition}
Let $V$ and $V^\prime$ be two $L^0({\mathcal F})$-modules, a mapping $T: V\to V^\prime$ is said to map each $L^0$-line to an $L^0$-line if for any two points $x$ and $y$ in $V$ there exist two points $u$ and $v$ in $V^\prime$ such that $T(l(x,y))=l(u,v)$.
\end{definition}

Let $V$ and $V^\prime$ be two $L^0({\mathcal F})$-modules and $T: V\to V^\prime$ an injective mapping. According to (1) of Proposition \ref{aff-lc}, if $T$ is $L^0$-affine then $T$ maps each $L^0$-line to an $L^0$-line. Proposition \ref{line-line} below states that the converse is also true when $V$ and $V^\prime$ are regular and $V$ contains a free $L^0({\mathcal F})$-submodule of rank 2. This result may also be called the fundamental theorem of affine geometry in regular $L^0$-modules. To prove Proposition \ref{line-line}, we will first show a relatively simple version--Proposition \ref{line-line2} below, in which $V$ itself is assumed to be a free module of rank 2.

 Before we state and prove Propositions \ref{line-line2} and \ref{line-line}, we make a comparison between straight lines in a real vector space and $L^0$-lines in an $L^0({\mathcal F})$-module. In a real vector space, any two different points in a given straight line determine the same straight line, whereas in an $L^0({\mathcal F})$-module $V$, if $w$ and $z$ are two different points in the $L^0$-line $l(x,y)$, the $L^0$-line $l(w,z)$ may be not the same as $l(x,y)$. For instance, let $x\in V$ be a nonzero element, then for any $A\in {\mathcal F}$, ${\tilde I}_Ax$ lies in the $L^0$-line $l(0,x)=\{\lambda x: \lambda\in L^0({\mathcal F})\}$, however if ${\tilde I}_Ax$ is nonzero, then the $L^0$-line $l(0,{\tilde I}_Ax)=\{{\tilde I}_A\lambda x: \lambda\in L^0({\mathcal F})\}$ is probably not the same as $l(0,x)$. Thus we should be careful when we handle problems involving $L^0$-lines.

 Since the mappings involved in the main results of this paper are injective, Proposition \ref{line-line-exact} below shows that when one wants to prove that an injective mapping $T$ maps an $L^0$-line to an $L^0$-line, he need only prove $T(l(x,y))=l(T(x),T(y))$ for any two different $x$ and $y$, since this is obvious for $x=y$.

\begin{proposition}\label{line-line-exact}
  Let $V$ and $V^\prime$ be two regular $L^0({\mathcal F})$-modules. If $T: V\to V^\prime$ is stable, injective and maps each $L^0$-line to an $L^0$-line, then $T(l(x,y))=l(T(x),T(y))$ for any two different points $x$ and $y$ in $V$.
\end{proposition}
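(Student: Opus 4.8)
The plan is to exploit the defining hypothesis in full: it furnishes some pair $u,v\in V^\prime$ with $T(l(x,y))=l(u,v)$, and the task is to show that this line coincides with $l(T(x),T(y))$. Since $x=1\cdot x+0\cdot y$ and $y=0\cdot x+1\cdot y$ lie in $l(x,y)$, their images $T(x),T(y)$ lie in $l(u,v)$, so I may write $T(x)=\alpha u+(1-\alpha)v$ and $T(y)=\beta u+(1-\beta)v$ for suitable $\alpha,\beta\in L^0(\mathcal F)$. The inclusion $l(T(x),T(y))\subseteq l(u,v)$ is then a routine computation: substituting these expressions into $\mu T(x)+(1-\mu)T(y)$ and collecting terms yields $\gamma u+(1-\gamma)v$ with $\gamma=\beta+\mu(\alpha-\beta)$, which lies in $l(u,v)$. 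The whole difficulty is the reverse inclusion $l(u,v)\subseteq l(T(x),T(y))$, i.e. solving $\xi u+(1-\xi)v=\mu T(x)+(1-\mu)T(y)$ for $\mu$ given an arbitrary $\xi$; this amounts to solving $\mu(\alpha-\beta)=\xi-\beta$, the equation $\gamma=\xi$.

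Everything hinges on the behaviour of $\alpha-\beta$, so I split $\Omega$ along $A:=[\alpha=\beta]$. On $A^c=[\alpha\neq\beta]$ the element $\alpha-\beta$ is invertible, and the generalized inverse of Section 2 solves the equation there: the natural candidate is $\mu=(\xi-\beta)(\alpha-\beta)^{-1}$, which is automatically supported on $A^c$ since $(\alpha-\beta)^{-1}$ is. With this choice one computes $\mu(\alpha-\beta)=(\xi-\beta)\tilde{I}_{A^c}$, hence $\gamma=\beta+(\xi-\beta)\tilde{I}_{A^c}$, so that $\gamma-\xi=\tilde{I}_A(\beta-\xi)$.

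The main obstacle is the set $A$, where $\alpha-\beta$ cannot be inverted and where, a priori, the target line $l(T(x),T(y))$ may degenerate (recall the paper's warning that two points need not determine an $L^0$-line). The key claim that overcomes this is that $l(u,v)$ itself already degenerates on $A$, namely $\tilde{I}_A u=\tilde{I}_A v$. To prove it I would argue as follows: on $A$ we have $\tilde{I}_A\alpha=\tilde{I}_A\beta$, hence $\tilde{I}_A T(x)=\tilde{I}_A T(y)$; stability then gives $T(\tilde{I}_A x+\tilde{I}_{A^c}y)=\tilde{I}_A T(x)+\tilde{I}_{A^c}T(y)=T(y)$, so injectivity forces $\tilde{I}_A x=\tilde{I}_A y$. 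Consequently every $z=\lambda x+(1-\lambda)y\in l(x,y)$ satisfies $\tilde{I}_A z=\tilde{I}_A x$, and by the local property (Proposition \ref{aff-lc}(4)) $\tilde{I}_A T(z)=\tilde{I}_A T(\tilde{I}_A z)=\tilde{I}_A T(\tilde{I}_A x)=\tilde{I}_A T(x)$. Since $T(l(x,y))=l(u,v)$, this says $\tilde{I}_A w^\prime=\tilde{I}_A T(x)$ for \emph{every} $w^\prime\in l(u,v)$; taking $w^\prime=u$ and $w^\prime=v$ yields $\tilde{I}_A u=\tilde{I}_A v=\tilde{I}_A T(x)$, as claimed. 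It is exactly here that I must use the hypothesis that $T$ maps the whole line $l(x,y)$ onto $l(u,v)$, not merely that $T(x),T(y)\in l(u,v)$.

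Finally I would assemble the two regimes. With $\mu=(\xi-\beta)(\alpha-\beta)^{-1}$ as above, $\mu T(x)+(1-\mu)T(y)=\gamma u+(1-\gamma)v$, and its discrepancy from $w=\xi u+(1-\xi)v$ is $(\gamma-\xi)(u-v)=\tilde{I}_A(\beta-\xi)(u-v)$, which vanishes because $\tilde{I}_A(u-v)=0$. Hence $w=\mu T(x)+(1-\mu)T(y)\in l(T(x),T(y))$, giving $l(u,v)\subseteq l(T(x),T(y))$ and therefore $T(l(x,y))=l(u,v)=l(T(x),T(y))$. I expect the degeneracy analysis on $A$ --- establishing $\tilde{I}_A u=\tilde{I}_A v$ through stability, injectivity and the local property --- to be the crux; once it is in hand, the single formula for $\mu$ settles both regimes simultaneously, and regularity is not even invoked explicitly at the last step.
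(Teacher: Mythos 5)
Your proof is correct, and its route differs from the paper's in a substantive way. The paper first normalizes to $S=T-T(0)$ (so as to use part (3) of Proposition \ref{aff-lc}) and then argues at the level of supports of module elements: invoking regularity, it shows $[S(x)-S(y)=0]=[x-y=0]$, identifies $[u-v=0]$ with this set, proves $[x-y\neq 0]\subset[\xi-\eta\neq 0]$ for the coefficients $\xi,\eta$ of $S(x),S(y)$ relative to $u,v$, and finally inverts $\xi-\eta$ to exhibit $u$ and $v$ explicitly as affine combinations of $S(x)$ and $S(y)$, whence $l(u,v)\subset l(S(x),S(y))$. You skip the normalization by working with the local property (part (4) of Proposition \ref{aff-lc}, valid for any stable map), and, more importantly, you run the degeneracy analysis at the level of the coefficients $\alpha,\beta$ rather than of supports: your set $A=[\alpha=\beta]$ lives entirely inside $L^0(\mathcal F)$, and your chain --- $\tilde I_AT(x)=\tilde I_AT(y)$, hence $\tilde I_Ax=\tilde I_Ay$ by stability plus injectivity, hence $\tilde I_AT(z)=\tilde I_AT(x)$ for every $z\in l(x,y)$ by locality, hence $\tilde I_Au=\tilde I_Av$ by surjectivity of $T$ onto $l(u,v)$ --- is exactly what is needed for the single formula $\mu=(\xi-\beta)(\alpha-\beta)^{-1}$ to solve the problem on all of $\Omega$, since the residual discrepancy $(\gamma-\xi)(u-v)=\tilde I_A(\beta-\xi)(u-v)$ then vanishes. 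The dividend of your route is that regularity of $V$ and $V^\prime$ is never used (the support notion, which is the only place regularity enters the paper's proof, is absent from yours), so your argument in fact proves the proposition for arbitrary $L^0(\mathcal F)$-modules; what the paper's route buys instead is sharper structural information, namely the exact identification $[u-v=0]=[x-y=0]$, whereas you only need, and only prove, the one inclusion $\tilde I_{[\alpha=\beta]}(u-v)=0$.
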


\begin{proof}
Define $S: V\to V^\prime$ by $S(x)=T(x)-T(0), \forall x\in V$, then $S(0)=0$. With the assumptions on $T$, we obtain that $S$ is stable, injective and maps each $L^0$-line to an $L^0$-line. Fix any two points $x$ and $y$ in $V$, it remains to show that $S(l(x,y))=l(S(x),S(y))$. Suppose that $u$ and $v$ are in $V^\prime$ such that $S(l(x,y))=l(u,v)$. Since $l(S(x),S(y))\subset l(u,v)$ is obvious, it suffices to show that $l(u,v)\subset l(S(x),S(y))$.

Using (3) of Proposition \ref{aff-lc}, we obtain that $$\{B\in {\mathcal F}: \tilde I_B[S(x)-S(y)]=0\}=\{B\in {\mathcal F}: S(\tilde I_Bx)-S(\tilde I_By)=0\},$$
 and by the injectivity of $S$ we obtain that $$\{B\in {\mathcal F}: S(\tilde I_Bx)-S(\tilde I_By)=0\}=\{B\in {\mathcal F}: \tilde I_Bx-\tilde I_By=0\}.$$ Thus $\{B\in {\mathcal F}: \tilde I_B[S(x)-S(y)]=0\}=\{B\in {\mathcal F}: \tilde I_B(x-y)=0\}$, which implies that $[x-y=0]=[S(x)-S(y)=0]$, equivalently $[x-y\neq 0]=[S(x)-S(y)\neq 0]$.

Denote $A=[x-y\neq 0]$ and $A^c=[x-y=0]$. Since we have shown that $[S(x)-S(y)\neq 0]=A$, we will show that $[u-v\neq 0]=A$.

Let $\xi_1$ and $\eta_1$ be elements in $L^0({\mathcal F})$ such that $u=S[\xi_1 x+(1-\xi_1) y]$ and $v=S[\eta_1 x+(1-\eta_1)y]$. Since $ I_{A^c}x=I_{A^c}y$, we obtain that $I_{A^c}[\xi_1 x+(1-\xi_1) y]=\xi_1 I_{A^c} x+(1-\xi_1)I_{A^c} y=I_{A^c} x$, and similarly $I_{A^c}[\eta_1 x+(1-\eta_1) y]=I_{A^c} x$. Then using (3) of Proposition \ref{aff-lc}, we obtain that $I_{A^c}u=I_{A^c}S[\xi_1 x+(1-\xi_1) y]=S(I_{A^c} x)$, and similarly $I_{A^c}v=S(I_{A^c} x)$. Therefore $I_{A^c}u=I_{A^c}v$, implying that $A^c\subset [u-v=0]$.

Let $\xi$ and $\eta$ be elements in $L^0({\mathcal F})$ such that $S(x)=\xi u+(1-\xi) v$ and $S(y)=\eta u+(1-\eta) v$, then $S(x)-S(y)=(\xi-\eta)(u-v)$. For any $B\in {\mathcal F}$ satisfying $\tilde I_B(u-v)=0$, we have $\tilde I_B[S(x)-S(y)]=(\xi-\eta)\tilde I_B(u-v)=0$, thus $[u-v=0]\subset [S(x)-S(y)=0]=A^c$. Similarly we deduce that $[\xi-\eta=0]\subset [S(x)-S(y)=0]=A^c$, equivalently $A\subset [\xi-\eta\neq 0]$. Combining $A^c\subset [u-v=0]$ and $[u-v=0]\subset A^c$, we obtain that $[u-v=0]=A^c$, equivalently $[u-v\neq 0]=A$.

Now we can deduce that $u-v=I_A(u-v)=I_AI_{[\xi-\eta\neq 0]}(u-v)=I_A(\xi-\eta)^{-1}(\xi-\eta)(u-v)=(\xi-\eta)^{-1}I_A[S(x)-S(y)]=(\xi-\eta)^{-1}[S(x)-S(y)]$.

 Combining $S(x)=\xi u+(1-\xi) v$ and $u-v=(\xi-\eta)^{-1}[S(x)-S(y)]$, we obtain that $v=[1-\xi(\xi-\eta)^{-1}]S(x)+\xi(\xi-\eta)^{-1}S(y)$ and $u=[1-\xi(\xi-\eta)^{-1}+(\xi-\eta)^{-1}]S(x)+[\xi(\xi-\eta)^{-1}-(\xi-\eta)^{-1}]S(y)$. Thus both $u$ and $v$ belong to $l(S(x),S(y))$, which implies that $l(u,v)\subset l(S(x),S(y))$.

 This completes the proof.
\end{proof}

We can now state and prove Propositions \ref{line-line2} and \ref{line-line}.

\begin{proposition}\label{line-line2}
 Let $V$ and $V^\prime$ be two regular $L^0({\mathcal F})$-modules such that $V$ is free with $rank(V)=2$. If $T: V\to V^\prime$ is stable, injective and maps each $L^0$-line to an $L^0$-line, then $T$ must be $L^0$-affine.
\end{proposition}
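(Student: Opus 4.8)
The plan is to reduce the statement to $L^0$-linearity and then transport the classical proof of the fundamental theorem of affine geometry into the $L^0(\mathcal F)$-setting, with the rigidity ``the only field automorphism of $\mathbb R$ is the identity'' replaced by Lemma \ref{id}. First I would set $S(x)=T(x)-T(0)$, so that $S(0)=0$; by Proposition \ref{aff-lc}(1) it suffices to prove that $S$ is $L^0$-linear. The map $S$ inherits stability, injectivity and the line-to-line property, so Proposition \ref{line-line-exact} gives the exact identity $S(l(x,y))=l(S(x),S(y))$ for all $x\neq y$, while Proposition \ref{aff-lc}(3) gives $S(\tilde I_A x)=\tilde I_A S(x)$. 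A short argument using injectivity together with this localization shows that $S$ preserves full support: if $x$ has full support, then applying $S$ to $\tilde I_{[S(x)=0]}x$ and using injectivity forces $\tilde I_{[S(x)=0]}=0$, so $S(x)$ has full support as well.

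Fix a basis $\{e_1,e_2\}$ of the free module $V$; since $e_1,e_2$ are $L^0$-independent they have full support, and hence so do $u:=S(e_1)$ and $v:=S(e_2)$. The first genuine step is to prove that $u$ and $v$ are $L^0$-independent. I would argue by contradiction: if $u$ and $v$ were proportional on some $A\in\mathcal F$ with $P(A)>0$, then the two lines $\tilde I_A\,l(0,e_1)$ and $\tilde I_A\,l(0,e_2)$, which on $A$ meet only at $0$, would both be carried into the single line $\tilde I_A\,l(0,u)$; the bijectivity of the scalar reparametrizations introduced below then yields two distinct points of $V$ with the same image, contradicting injectivity. Next, for each full-support $w$ the restriction of $S$ to $l(0,w)=\{\lambda w:\lambda\in L^0(\mathcal F)\}$ is a bijection onto $l(0,S(w))$, so there is a well-defined $\phi_w:L^0(\mathcal F)\to L^0(\mathcal F)$ with $S(\lambda w)=\phi_w(\lambda)S(w)$; from $S(\tilde I_A\lambda\,w)=\tilde I_A S(\lambda w)$ and the full support of $S(w)$ one reads off $\phi_w(\tilde I_A\lambda)=\tilde I_A\phi_w(\lambda)$, so $\phi_w$ is local, and clearly $\phi_w(0)=0$, $\phi_w(1)=1$, and $\phi_w$ is bijective.

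The heart of the proof is additivity and the identification of $\phi$. For any independent pair $p,q$ I would run the parallelogram argument on the lines $l(0,q)$, $l(p,p+q)=p+l(0,q)$, $l(0,p)$ and $l(q,p+q)=q+l(0,p)$: independence forces the relevant intersections to be single points (the coefficients are pinned to $1$ on all of $\Omega$), so, applying $S$ and using that injectivity turns intersections into intersections, I can first locate $S(p+q)$ inside $\mathrm{span}_{L^0}(S(p),S(q))$ (via the diagonal, whose midpoint gives an invertible coefficient) and then pin the coefficients down to obtain $S(p+q)=S(p)+S(q)$. Taking $p=\xi e_1$, $q=\eta e_2$ with $\xi,\eta$ invertible gives $S(\xi e_1+\eta e_2)=\phi(\xi)u+\phi(\eta)v$, where comparing $S(\lambda(e_1+e_2))$ with $\phi(\lambda)(u+v)$ and using independence of $u,v$ shows $\phi_{e_1}=\phi_{e_2}=:\phi$. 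The additivity and multiplicativity of $\phi$ I would then extract from the two classical affine gadgets (addition and multiplication via pencils of parallel lines built from $\xi e_1$, $\eta e_2$ and $e_2$), each run on the event where the scalars in question are invertible and then glued over a countable partition by regularity. With $\phi$ local, additive, multiplicative and fixing $1$, Lemma \ref{id} forces $\phi=\mathrm{id}$, whence $S(\xi e_1+\eta e_2)=\xi u+\eta v=\xi S(e_1)+\eta S(e_2)$ for invertible $\xi,\eta$, and stability together with regularity extends this to all $\xi,\eta$, giving $L^0$-linearity of $S$ and hence $L^0$-affineness of $T$.

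I expect the additivity and multiplicativity steps to be the main obstacle, for a reason peculiar to $L^0(\mathcal F)$: because the ring contains the idempotents $\tilde I_A$, the classical dichotomy ``two lines are either parallel or meet in a single point'' fails, and the disjointness of two $L^0$-lines no longer forces them to share a direction, so I cannot read off the direction of $S(p+l(0,q))$ merely from its disjointness with $l(0,S(q))$. The way around this is exactly the hypothesis that $V$ be free of rank $2$: independence of $e_1,e_2$ (and of $u,v$) restores genuine single-point intersections for the specific lines used in the parallelogram and product constructions, while stability and regularity let me carry out each gadget on the event where the relevant scalars are invertible and glue the conclusions over a countable partition. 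This localize-and-glue device, rather than any single computation, is what replaces the field-theoretic geometry of the classical argument.
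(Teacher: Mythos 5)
Your skeleton (pass to $S=T-T(0)$, show $S$ preserves full support, show images of $L^0$-independent pairs are $L^0$-independent, define the scalar maps $\phi_w$ by $S(\lambda w)=\phi_w(\lambda)S(w)$, and finish with Lemma \ref{id}) coincides with the paper's, and several of your steps -- locality of $\phi_w$ via Proposition \ref{aff-lc}(3), the comparison $\phi_{e_1}=\phi_{e_1+e_2}=\phi_{e_2}$ through independence of $S(e_1),S(e_2)$, locating $S(p+q)$ in the span via the diagonal -- are exactly what the paper does. The genuine gap is at the load-bearing steps: you propose to obtain the pinning $S(p+q)=S(p)+S(q)$, and then additivity and multiplicativity of $\phi$, from the classical parallelogram and parallel-pencil gadgets ``run on the event where the scalars in question are invertible and then glued.'' Those gadgets need parallelism preservation (the image of the auxiliary line through $\xi e_1+e_2$ with direction $\eta e_1-e_2$ must again have that direction), which is precisely the property you yourself identify as unavailable over $L^0({\mathcal F})$. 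Localization does not repair this: the obstruction is not that particular scalars such as $\xi,\eta$ or $\xi+\eta$ fail to be invertible (that is fixable by restricting to an event), but that on every $A$ of positive measure the ring $\tilde I_AL^0({\mathcal F})$ still contains nonzero non-invertible elements when $P$ is atomless, so ``disjoint coplanar lines are parallel'' fails on every piece of every countable partition and the gadgets never become available. Concretely, writing $S(p+q)=aS(p)+bS(q)$, disjointness of $l(S(p),S(p+q))$ from $l(0,S(q))$ together with independence yields only that $a-1$ is not invertible; it does not yield $a=1$, and no intersection pattern of the four parallelogram lines pins it further.

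What is needed instead (and what the paper does) is algebraic, not geometric. For the pinning: argue by contradiction on $A=[a\neq 1]$, set $c_1=-(a-1)^{-1}$ (generalized inverse), use surjectivity of $S$ from $l(x,x+y)$ onto $l(S(x),S(x+y))$ to pull back the point with parameter $c_1$ to some $x+c_0y$, multiply by $\tilde I_A$ using stability so that the $S(x)$-coefficient dies, observe that the result lies in $l(0,S(y))=S(l(0,y))$, and let injectivity produce $\tilde I_A(x+c_0y)=\xi y$, contradicting independence; your phrase ``injectivity turns intersections into intersections'' is only the trivial direction and does not by itself produce this contradiction, though this step of yours is repairable along these lines. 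For additivity of $\phi$, i.e.\ $S(\xi e_1+\eta e_1)=S(\xi e_1)+S(\eta e_1)$, replace the addition gadget by the decomposition into the $L^0$-independent pair $x_1=\xi e_1+\tilde I_{A^c}e_1+e_2$, $y_1=\eta e_1-e_2$ with $A=[\xi+\eta\neq 0]$: the perturbation $\tilde I_{A^c}e_1$ makes $\xi+\eta+\tilde I_{A^c}$ invertible so that independent-pair additivity applies, the identity $S(e_2)+S(-e_2)=0$ (itself from independent-pair additivity applied to $e_1-e_2$ and $e_2$) removes the $\pm e_2$ crutch, and locality strips off $\tilde I_{A^c}e_1$ at the end. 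For multiplicativity you already hold the right tool but do not use it: the same comparison that gave you $\phi_{e_1}=\phi_{e_2}$, applied to the independent pair $\eta_1e_1,\,e_2$ with $\eta_1$ invertible, gives $\phi_{\eta_1e_1}=\phi_{e_2}=\phi$, whence $\phi(\xi\eta_1)=\phi_{\eta_1e_1}(\xi)\,\phi(\eta_1)=\phi(\xi)\phi(\eta_1)$, and the general case follows by writing $\eta=\tilde I_A\eta_1$ with $A=[\eta\neq 0]$, $\eta_1=\tilde I_A\eta+\tilde I_{A^c}$ and localizing. No parallel-line construction appears anywhere in the correct argument, and none can be made to work in this setting.
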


\begin{proof}
Define $S: V\to V^\prime$ by $S(x)=T(x)-T(0), \forall x\in V$, then $S(0)=0$. With the assumptions on $T$, $S$ is stable and injective. By Proposition \ref{line-line-exact}, $S$ maps each $L^0$-line $l(x,y)$ to the $L^0$-line $l(S(x),S(y))$. It remains to show that $S$ is $L^0$-linear. The proof is divided into 4 steps. We assume that $\{e_1,e_2\}$ is a basis of $V$ and point out in advance that (3) of Proposition \ref{aff-lc} is frequently used.

{\em Step 1.} For any $L^0({\mathcal F})$-independent elements $x$ and $y$ in $V$, we have that $S(x)$ and $S(y)$ are $L^0({\mathcal F})$-independent and $S(x+y)=S(x)+S(y)$.

For any $z\in V$ with full support, let $A=[S(z)=0]$, then by (3) of Proposition \ref{aff-lc}, $S(I_A z)=I_AS(z)=0$. Since $S$ is injective, we obtain that $I_A z=0$. Thus $I_A=0$, implying that $S(z)$ has full support.

Suppose that $\xi$ and $\eta$ are two elements in $L^0({\mathcal F})$ such that $\xi S(x)+\eta S(y)=0$. Since $S$ is injective and maps the $L^0$-line $l(0,x)$ to the $L^0$-line $l(0, S(x))$, there exists $\alpha\in L^0({\mathcal F})$ such that $\xi S(x)=S(\alpha x)$. Similarly, there exists $\beta\in L^0({\mathcal F})$ such that $-\eta S(y)=S(\beta y)$. By the injectivity of $S$ we get $\alpha x=\beta y$, then $\alpha=\beta=0$ since $x$ and $y$ are $L^0({\mathcal F})$-independent. As a result, $\xi S(x)=-\eta S(y)=0$, then using the fact that both $S(x)$ and $S(y)$ have full support, we conclude that $\xi=\eta=0$, which means that $S(x)$ and $S(y)$ are $L^0({\mathcal F})$-independent.

We then show that there exist $a$ and $b$ in $L^0({\mathcal F})$ such that $S(x+y)=aS(x)+bS(y)$. In fact, since $x+y$ lies in the $L^0$-line $l(2x,2y)$, there exists $\mu\in L^0({\mathcal F})$ such that $S(x+y)=\mu S(2x)+(1-\mu)S(2y)$. Since $2x$ lies in the $L^0$-line $l(0,x)$ and $2y$ in the $L^0$-line $l(0,y)$, there exist two elements $\alpha_1$ and $\beta_1$ in $L^0({\mathcal F})$ such that $S(2 x)=\alpha_1S(x)$ and $S(2y)=\beta_1S(y)$, then $a=\mu\alpha_1$ and $b=(1-\mu)\beta_1$ satisfy $S(x+y)=aS(x)+bS(y)$.

We claim that $a=1$ and $b=1$. We prove it by contradiction. If $a\neq 1$, let $A=[a\neq 1]$ and $c_1=-(a-1)^{-1}$,
 then $I_A\neq 0$ and $I_A[1+c_1(a-1)]=0$. Since the $L^0$-line $l(x,x+y)=\{x+cy: c\in L^0({\mathcal F})\}$ is mapped by $S$ to the $L^0$-line $l(S(x),S(x+y))$, there exists $c_0\in L^0({\mathcal F})$ such that $S(x+c_0y)=(1-c_1)S(x)+c_1S(x+y)=[1+c_1(a-1)]S(x)+c_1bS(y)$. Using (3) of Proposition \ref{aff-lc} we obtain that $S(I_A(x+c_0y))=I_AS(x+c_0y)=I_Ac_1b S(y)$. Note that there exists some $\xi\in L^0({\mathcal F})$ such that $I_Ac_1b S(y)=S(\xi y)$, then by the injectivity of $S$, we get $I_A(x+c_0y)=\xi y$, contradicting to the assumption that $x$ and $y$ are $L^0({\mathcal F})$-independent. Therefore, $a=1$. Similarly, $b=1$.

{\em Step 2.} For any two $L^0({\mathcal F})$-independent elements $x$ and $y$ in $V$, we have $S(\xi x+\eta y)=S(\xi x)+S(\eta y), \forall\xi,\eta\in L^0({\mathcal F})$. Specially, $S(\xi e_1+\eta e_2)=S(\xi e_1)+S(\eta e_2), \forall \xi,\eta\in L^0({\mathcal F})$.

First suppose that $\xi=\tilde I_A$ and $\eta=\tilde I_B$ for some $A$ and $B$ in ${\mathcal F}$. Since
$\tilde I_A x+\tilde I_By=\tilde I_{A\cap B}(x+y)+\tilde I_{A\setminus B}x+\tilde I_{B\setminus A}y$, we have that $S(\tilde I_A x+\tilde I_By)=\tilde I_{A\cap B}S(x+y)+\tilde I_{A\setminus B}S(x)+\tilde I_{B\setminus A}S(y)=\tilde I_{A\cap B}[S(x)+S(y)]+\tilde I_{A\setminus B}S(x)+\tilde I_{B\setminus A}S(y)=\tilde I_A S(x)+\tilde I_BS(y)=S(\tilde I_A x)+S(\tilde I_By)$.

Generally, for any $\xi$ and $\eta$ in $L^0({\mathcal F})$, let $A=[\xi\neq 0]$ and $B=[\eta\neq 0]$, and take $x_1=\xi x+I_{A^c}x$ and $y_1=\eta y+I_{B^c}y$. Then $\xi x=I_Ax_1$, $\eta y=I_B y_1$, and $x_1$ and $y_1$ are $L^0({\mathcal F})$-independent. In fact, if $\alpha$ and $\beta$ are elements in $L^0({\mathcal F})$ such that $\alpha x_1+\beta y_1=0$, now that $x$ and $y$ are $L^0({\mathcal F})$-independent, we deduce that $\alpha(\xi+I_{A^c})=0$ and $\beta(\eta+I_{B^c})=0$. Note that $\xi+I_{A^c}\neq 0$ on $\Omega$ and $\eta+I_{B^c}\neq 0$ on $\Omega$, we thus obtain $\alpha=\beta=0$. Now we have shown that $x_1$ and $y_1$ are $L^0({\mathcal F})$-independent, then $S(\xi x+\eta y)=S(I_A x_1+I_By_1)=S(\tilde I_A x_1)+S(\tilde I_By_1)=S(\xi x)+S(\eta y)$.

{\em Step 3.} For each $i\in \{1,2\}$, we have $S(\xi e_i+\eta e_i)=S(\xi e_i)+S(\eta e_i), \forall \xi,\eta\in L^0({\mathcal F})$.

By symmetry, it suffices to prove the case when $i=1$.

Since $e_1-e_2$ and $e_2$ are obviously $L^0({\mathcal F})$-independent, we get from Step 2 that $S(e_1)=S(e_1-e_2+ e_2)=S(e_1-e_2)+S(e_2)=S(e_1)+S(-e_2)+S(e_2)$, therefore $S(e_2)+S(-e_2)=0$.

Now fix two elements $\xi$ and $\eta$ in $L^0({\mathcal F})$, let $A=[\xi+\eta\neq 0]$. Then $x_1=\xi e_1+I_{A^c}e_1+e_2$ and $y_1=\eta e_1-e_2$ are $L^0({\mathcal F})$-independent. Indeed, if $\alpha$ and $\beta$ are elements in $L^0({\mathcal F})$ such that $\alpha x_1+\beta y_1=(\alpha \xi+\alpha I_{A^c}+\beta\eta)e_1+(\alpha-\beta)e_2=0$, then $\alpha \xi+\alpha I_{A^c}+\beta\eta=0$ and $\alpha-\beta=0$, equivalently $\alpha=\beta$ and $\alpha(\xi+\eta+I_{A^c})=0$. Noting that $\xi+\eta+I_{A^c}\neq 0$ on $\Omega$, we thus obtain $\alpha=\beta=0$. From Step 2, noting that $e_1$ and $I_{A^c}e_1+e_2$ are $L^0({\mathcal F})$-independent, we get $S(x_1)=S(\xi e_1)+S(I_{A^c}e_1+e_2)=S(\xi e_1)+S(I_{A^c}e_1)+S(e_2)$ and $S(y_1)=S(\eta e_1)+S(-e_2)$. On the other hand, since $x_1$ and $y_1$ are $L^0({\mathcal F})$-independent, it follows from Step 2 that $S(\xi e_1+I_{A^c} e_1+\eta e_1)=S(x_1+y_1)=S(x_1)+S(y_1)$. Therefore, using $S(e_2)+S(-e_2)=0$ we get $S(\xi e_1+I_{A^c} e_1+\eta e_1)=S(\xi e_1)+S(I_{A^c}e_1)+S(\eta e_1)$. Using (3) of Proposition \ref{aff-lc}, we obtain that $I_{A^c}S(\xi e_1+I_{A^c} e_1+\eta e_1)=S[I_{A^c}(\xi e_1+I_{A^c} e_1+\eta e_1)]=S(I_{A^c} e_1)$,
hence $S(\xi e_1+\eta e_1)=S[I_A(\xi e_1+I_{A^c} e_1+\eta e_1)]=I_AS(\xi e_1+I_{A^c} e_1+\eta e_1)=S(\xi e_1+I_{A^c} e_1+\eta e_1)-I_{A^c}S(\xi e_1+I_{A^c} e_1+\eta e_1)=S(\xi e_1)+S(\eta e_1)$.

{\em Step 4}. For each $i\in \{1,2\}$, we have $S(\xi e_i)=\xi S(e_i), \forall \xi\in L^0({\mathcal F})$.

 Fix an $x$ in $V$ with full support. For any $\xi\in L^0({\mathcal F})$, since $\xi x$ lies in the $L^0$-line $l(0,x)$, there exists $\mu \in L^0({\mathcal F})$ such that $S(\xi x)=\mu S(x)$. By Step 1, $S(x)$ has full support, thus $\mu$ is uniquely determined by $\xi$ (and $x$).
Therefore we can define a mapping $f_x:L^0({\mathcal F})\to L^0({\mathcal F})$ by the relation $S(\xi x)=f_x(\xi)S(x), \forall \xi \in L^0({\mathcal F})$.

Specially, for each $i\in \{1,2\}$, we have a mapping $f_i: L^0({\mathcal F})\to L^0({\mathcal F})$ such that $S(\xi e_i)=f_i(\xi) S(e_i), \forall \xi \in L^0({\mathcal F})$.

 We show that $f_1=f_2$. For each $\xi\in L^0({\mathcal F})$, since $e_1+e_2$ has full support, we have $S(\xi(e_1+e_2))=f_{e_1+e_2}(\xi)S(e_1+e_2)=f_{e_1+e_2}(\xi)[S(e_1)+S(e_2)]$, where the last equality follows from Step 1. From Step 2, $S(\xi(e_1+e_2))=S(\xi e_1)+S(\xi e_2)=f_1(\xi)S(e_1)+f_2(\xi)S(e_2)$. Thus we obtain $f_{e_1+e_2}(\xi)[S(e_1)+S(e_2)]=f_1(\xi)S(e_1)+f_2(\xi)S(e_2)$. We have known from Step 1 that $S(e_1)$ and $S(e_2)$ are $L^0({\mathcal F})$-independent, thus $f_1(\xi)=f_{e_1+e_2}(\xi)=f_2(\xi)$.

Please note that using a similar argument, for any $\eta\in L^0({\mathcal F})$ such that $\eta\neq 0$ on $\Omega$, we have $f_{\eta e_1}(\xi)=f_2(\xi)=f_1(\xi), \forall \xi\in L^0({\mathcal F})$.

We proceed to show that $f_1(\xi)=\xi, \forall \xi\in L^0({\mathcal F})$.

First, it is obvious that $f_1(0)=0$ and $f_1(1)=1$. Then by (3) of Proposition \ref{aff-lc}, $S(\tilde I_A\xi e_1)=\tilde I_AS(\xi e_1)$ for any $\xi\in L^0({\mathcal F})$ and $A\in {\mathcal F}$, implying that $f_1(\tilde I_A\xi)=\tilde I_Af_1(\xi)$, which means that $f_1$ is local. By Step (3), $S(\xi e_1+\eta e_1)=S(\xi e_1)+S(\eta e_1)$ for any $\xi$ and $\eta$ in $L^0({\mathcal F})$, implying that $f_1(\xi+\eta)=f_1(\xi)+f_1(\eta), \forall \xi,\eta\in L^0({\mathcal F})$. Finally, for any $\xi$ and $\eta$ in $L^0({\mathcal F})$, choose $\eta_1\in L^0({\mathcal F})$ such that $\eta_1\neq 0$ on $\Omega$ and $\eta=I_A\eta_1$, where $A=[\eta\neq 0]$ (for instance, we can take $\eta_1=I_A\eta+I_{A^c}$), then we have $S((\xi\eta)e_1)=f_1(\xi\eta)S(e_1)$, and
\begin{align*}
  S((\xi\eta)e_1) & =S(\xi I_A\eta_1e_1) \\
   & =I_AS(\xi \eta_1e_1) \\
   & =I_Af_{\eta_1e_1}(\xi)S(\eta_1 e_1) \\
   & =I_Af_1(\xi)f_1(\eta_1)S(e_1) \\
   & =f_1(\xi)f_1(\eta)S(e_1).
\end{align*}
Noting that $S(e_1)$ has full support, we thus obtain $f_1(\xi\eta)=f_1(\xi)f_1(\eta)$.

To sum up, $f_1$ satisfies all the conditions (1-4) in Lemma \ref{id}, thus $f_1(\xi)=\xi, \forall \xi\in L^0({\mathcal F})$.

Combining Step 2 and Step 4, for any $\xi$ and $\eta$ in $L^0({\mathcal F})$, we have $S(\xi e_1+\eta e_2)=S(\xi e_1)+S(\eta e_2)=\xi S(e_1)+\eta S(e_2)$, meaning that $S$ is $L^0$-linear.

This completes the proof.
\end{proof}

\begin{proposition}\label{line-line}
 Let $V$ and $V^\prime$ be two regular $L^0({\mathcal F})$-modules such that $V$ contains a free $L^0({\mathcal F})$-submodule of rank $2$. If $T: V\to V^\prime$ is stable, injective and maps each $L^0$-line to an $L^0$-line, then $T$ must be $L^0$-affine.
\end{proposition}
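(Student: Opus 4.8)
The plan is to reduce the general situation to the free rank-$2$ case already settled in Proposition \ref{line-line2}, using the local structure theorem Proposition \ref{atfgm} to manufacture, after a finite partition of $\Omega$, the rank-$2$ free submodules on which that result applies.

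First I would replace $T$ by $S$ defined by $S(x)=T(x)-T(0)$, so that $S(0)=0$ and, by the hypotheses together with Proposition \ref{line-line-exact}, $S$ is stable, injective and sends each $L^0$-line $l(x,y)$ to $l(S(x),S(y))$. It then suffices to prove that $S$ is $L^0$-linear, which I would do by separately establishing homogeneity $S(\xi x)=\xi S(x)$ and additivity $S(x+y)=S(x)+S(y)$ for all $x,y\in V$ and $\xi\in L^0(\mathcal F)$; linearity of $S$ makes $T=S+T(0)$ $L^0$-affine. The basic engine is the observation that for any two $L^0(\mathcal F)$-independent elements $u,v\in V$ the submodule they generate is free of rank $2$ and regular, and $S$ restricted to it is again stable, injective and line-to-line, so Proposition \ref{line-line2} shows this restriction is $L^0$-linear; hence $S(\alpha u+\beta v)=\alpha S(u)+\beta S(v)$ whenever $u,v$ are independent.

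For homogeneity I would split $\Omega$ according to the support of $x$. On $[x=0]$ both sides vanish, since $\tilde I_{[x=0]}S(x)=S(\tilde I_{[x=0]}x)=S(0)=0$ by stability. On $[x\neq 0]$, where $x$ has full support, I would apply Proposition \ref{atfgm} to the finitely generated regular module generated by $x,e_1,e_2$, where $e_1,e_2$ is the given independent pair: after a finite partition it becomes free of rank at least $2$ on each piece, because $e_1,e_2$ remain independent after localization. On each such piece a full-support element admits an $L^0(\mathcal F)$-independent partner $w$ inside the local free module, so the engine gives $S(\xi x)=\xi S(x)$ there, and regularity of $V'$ patches the finitely many pieces together. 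Additivity I would treat by the same localization applied to the module generated by $x,y,e_1,e_2$, refining the partition by supports so that on each piece the pair $(x,y)$ has local rank $0$, $1$ or $2$: on rank-$0$ pieces both sides vanish, on rank-$2$ pieces $\tilde I_A x,\tilde I_A y$ are independent and the engine applies, and on rank-$1$ pieces one of the two is an $L^0(\mathcal F)$-multiple of the other, so additivity there reduces to the already-proved homogeneity. Patching by regularity of $V'$ then yields $S(x+y)=S(x)+S(y)$ in full generality.

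The main obstacle is exactly the degeneracy that is absent in Proposition \ref{line-line2}: an arbitrary $x$ need not have full support, and a general pair $x,y$ need not be $L^0(\mathcal F)$-independent, so the rank-$2$ machinery cannot be invoked directly. Indeed, the existence of an $L^0(\mathcal F)$-independent partner forces full support, which is precisely why the support decomposition and the local freeness furnished by Proposition \ref{atfgm} are indispensable; the delicate point is to carry out these finite localizations over the restricted algebras $\tilde I_A L^0(\mathcal F)$, to verify that stability, injectivity and the line-to-line property descend to each piece, and to reassemble the local identities via the regularity of $V'$. Once these reductions are organized, every case collapses either to a triviality coming from $S(0)=0$, to homogeneity, or to a single application of the rank-$2$ Proposition \ref{line-line2}.
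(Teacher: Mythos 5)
Your skeleton is the same as the paper's: reduce to Proposition \ref{line-line2} by restricting $S$ to free rank-$2$ submodules, with Proposition \ref{atfgm} supplying the local structure, and your ``engine'' is correct as stated: if $u,v\in V$ are $L^0(\mathcal F)$-independent, their span is a regular free module of rank $2$, the restriction of $S$ to it inherits stability, injectivity and the line-to-line property, and Proposition \ref{line-line2} together with $S(0)=0$ gives $S(\alpha u+\beta v)=\alpha S(u)+\beta S(v)$. The genuine gap is that you then invoke this engine on the pieces of a partition, where it cannot apply: once you localize to a set $A$ with $P(A)<1$, elements supported in $A$, such as $\tilde I_A x$, $\tilde I_A y$ or your ``partner $w$'', are \emph{never} $L^0(\mathcal F)$-independent, because $\tilde I_{A^c}\cdot \tilde I_A x=0$ while $\tilde I_{A^c}\neq 0$; they are only independent over the restricted algebra $\tilde I_A L^0(\mathcal F)$. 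So on your rank-$2$ pieces (additivity) and on your pieces inside $[x\neq 0]$ (homogeneity) the engine, as you formulated it, is simply unavailable. You flag this as ``the delicate point'' but do not resolve it, and it is not routine bookkeeping: you must either (i) restate and justify Proposition \ref{line-line2} over the trace probability space $(A,\{B\cap A: B\in\mathcal F\},P(\cdot\,|\,A))$ via the identification of $\tilde I_AL^0(\mathcal F)$ with the $L^0$-algebra of that space, or (ii) use the paper's device of \emph{padding}: replace $\tilde I_Ax,\tilde I_Ay$ by the globally independent pair $\tilde I_Ax+\tilde I_{A^c}e_1$, $\tilde I_Ay+\tilde I_{A^c}e_2$, apply the engine to these, and localize back using $S(\tilde I_Az)=\tilde I_AS(z)$. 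The paper organizes its entire proof around one such padding: applying Proposition \ref{atfgm} twice (once to the module generated by $x,y$, and once more on the rank-$1$ piece to decide where its generator $z$ is parallel to $x_0$), it manufactures a single globally independent pair $x_1,y_1$ whose span contains both $x$ and $y$, and then one application of Proposition \ref{line-line2} plus (1) of Proposition \ref{aff-lc} finishes, with no homogeneity/additivity split and no patching by regularity.

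A second, smaller error: on your rank-$1$ pieces it is false that ``one of the two is an $L^0(\mathcal F)$-multiple of the other''. If $z$ is a local basis on the piece $A$, then $\tilde I_Ax=\tilde I_Bz$ and $\tilde I_Ay=\tilde I_{A\setminus B}z$ with $0<P(B)<P(A)$ is a rank-$1$ configuration in which neither element is a multiple of the other. What is true is that both are multiples of the \emph{common} element $z$, say $\tilde I_Ax=\alpha z$ and $\tilde I_Ay=\beta z$, and then, provided global homogeneity has already been secured, additivity on $A$ does follow: $\tilde I_AS(x+y)=S\bigl((\alpha+\beta)z\bigr)=(\alpha+\beta)S(z)=S(\alpha z)+S(\beta z)=\tilde I_A\bigl[S(x)+S(y)\bigr]$. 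So this step is repairable, but only after the homogeneity step, which is exactly where the local/global gap above sits. Note that the paper confronts the same rank-$1$ subtlety: it cannot blindly take $x_0$ as the padding partner of the local generator $z$ (they may be parallel on part of the piece), which is precisely why it applies Proposition \ref{atfgm} a second time and switches to $y_0$ on the part where $z$ is parallel to $x_0$.
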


\begin{proof}
 Suppose that $x_0$ and $y_0$ are $L^0({\mathcal F})$-independent elements in $V$. Fix $x$ and $y$ in $V$, we first prove that there exists an $L^0({\mathcal F})$-submodule $V_1$ of $V$ such that $V_1$ is free with $rank(V_1)= 2$ and contains $x$ and $y$.

 Consider the $L^0({\mathcal F})$-submodule $U$ of $V$ generated by $x$ and $y$, namely $U=\{\xi x+\eta y: \xi, \eta\in L^0({\mathcal F})\}$, according to Proposition \ref{atfgm}, there exists a partition $\{A_0,A_1,A_2\}$ of $\Omega$ to $\mathcal F$ such that $\tilde I_{A_i}U$ is a free module of rank $i$ over the algebra $\tilde I_{A_i}L^0({\mathcal F})$ for each $i\in \{0,1,2\}$ satisfying $P(A_i)>0$, in which case $U=\bigoplus^2_{i=0}\tilde I_{A_i}U$. We may without loss of generality assume that $P(A_0)>0$, $P(A_1)>0$ and $P(A_2)>0$. Suppose that $\tilde I_{A_1}z$ is a basis of the free $\tilde I_{A_1}L^0({\mathcal F})$-module $\tilde I_{A_1}U$ of rank 1. Consider the $\tilde I_{A_1}L^0({\mathcal F})$-module $W$ generated by $\tilde I_{A_1}z$ and $\tilde I_{A_1}x_0$, again by Proposition \ref{atfgm} there exists a partition $\{B_0,B_1,B_2\}$ of $A_1$ to $\mathcal F$ such that $\tilde I_{B_i}W$ is a free module of rank $i$ over the algebra $\tilde I_{B_i}L^0({\mathcal F})$ for each $i\in \{0,1,2\}$ satisfying $P(B_i)>0$. Note that $\tilde I_{B_0}W=\{0\}$, specially $\tilde I_{B_0}x_0=0$, it follows that $\tilde I_{B_0}=0$, namely $P(B_0)=0$. We suppose both $P(B_1)>0$ and $P(B_2)>0$. Note in such a case, there exists a $\xi_0\in L^0({\mathcal F})$ with $\xi_0\neq 0$ on $B_1$ such that $I_{B_1}z=I_{B_1}\xi_0 x_0$. Take $x_1=\tilde I_{A_0}x_0+\tilde I_{A_1}z+\tilde I_{A_2}x$ and $y_1=\tilde I_{A_0}y_0+
\tilde I_{B_1}y_0+\tilde I_{B_2}x_0+\tilde I_{A_2}y$, we claim that $x_1$ and $y_1$ are $L^0({\mathcal F})$-independent, and $U$ a subset of $V_1:=\{\xi x_1+\eta y_1:\xi, \eta\in L^0({\mathcal F})\}$. In fact, on $A_0$, we have $\tilde I_{A_0}x_1=\tilde I_{A_0}x_0$ and $\tilde I_{A_0}y_1=\tilde I_{A_0}y_0$, thus $\tilde I_{A_0}x_1$ and $\tilde I_{A_0}y_1$ are $\tilde I_{A_0}L^0({\mathcal F})$-independent, and $\tilde I_{A_0}U=\tilde I_{A_0} \{0\}\subset \tilde I_{A_0}V_1$; on $A_1$, we have $\tilde I_{A_1}x_1=\tilde I_{A_1}z=\tilde I_{B_1}z+\tilde I_{B_2}z=I_{B_1}\xi_0 x_0+\tilde I_{B_2}z$ and $\tilde I_{A_1}y_1=\tilde I_{B_1}y_0+\tilde I_{B_2}x_0$, thus $\tilde I_{A_1}x_1$ and $\tilde I_{A_1}y_1$ are $\tilde I_{A_1}L^0({\mathcal F})$-independent, and $\tilde I_{A_1}U=span_{\tilde I_{A_1}L^0({\mathcal F})}\{\tilde I_{A_1}z\}\subset I_{A_1}V_1$; on $A_2$, we have $\tilde I_{A_2}x_1=\tilde I_{A_2}x$ and $\tilde I_{A_2}y_1=\tilde I_{A_2}y$, thus $\tilde I_{A_2}x_1$ and $\tilde I_{A_2}y_1$ are $\tilde I_{A_2}L^0({\mathcal F})$-independent, and $\tilde I_{A_2}U=\tilde I_{A_2}V_1$.

Now $V_1$ is a regular and free $L^0({\mathcal F})$-module of rank 2, and $x$ and $y$ are elements in $V_1$. Consider the restriction of $T$ to $V_1$,  then $T$ is $L^0$-affine on $V_1$ by Proposition \ref{line-line2}. It follows from (1) of Proposition \ref{aff-lc} that $T(\lambda x+(1-\lambda)y)=\lambda Tx+(1-\lambda)Ty, \forall \lambda\in L^0({\mathcal F})$.

This completes the proof.
\end{proof}

In the following, we give an example which shows that a bijective mapping $T: L^0({\mathcal F},\mathbb R^n)\to L^0({\mathcal F},\mathbb R^n)$, which maps any $L^0$-line to an $L^0$-line, may be not stable, thus according to (2) of Proposition \ref{aff-lc}, this mapping $T$ is not $L^0$-affine.

\begin{example}\label{exam}

Let $\theta: (\Omega,{\mathcal F},P)\to (\Omega,{\mathcal F},P)$ be an isomorphism, that is to say, $\theta$ is bijective and both $\theta$ and $\theta^{-1}$ are measure-preserving. Then $\theta$ induces a bijection $\sigma: L^0({\mathcal F})\to L^0({\mathcal F})$ sending $\xi$ to the equivalence class of $\xi^0(\theta(\cdot))$, where $\xi^0$ is a representative of $\xi\in L^0({\mathcal F})$. Further, for each positive integer $n$, $\theta$ induces a bijection $T: L^0({\mathcal F},\mathbb R^n)\to L^0({\mathcal F},\mathbb R^n)$ sending $(\xi_1,\dots,\xi_n)$ to $ (\sigma(\xi_1),\dots,\sigma(\xi_n))$. Then it is straightforward to check that $T$ maps each $L^0$-line to an $L^0$-line. However, if $\theta$ is not the identity mapping, $T$ is probably not stable.

Following is a more concrete example.

Let $\Omega$ be $[0,1)$, ${\mathcal F}$ the Borel $\sigma$-algebra of $[0,1)$, and $P$ the Lebesgue measure.
Define $\theta: [0,1)\to [0,1)$ by $\theta(\omega)=\omega+\frac{1}{2}$ for $\omega\in [0,\frac{1}{2})$, and $\theta(\omega)=\omega-\frac{1}{2}$ for $\omega\in [\frac{1}{2}, 1)$. We show that as stated above, the induced mapping $T: L^0({\mathcal F},\mathbb R^n)\to L^0({\mathcal F},\mathbb R^n)$ is not stable. Let $A=[0,\frac{1}{2})$, $B=[\frac{1}{2}, 1)$, then $\sigma(\tilde I_A)=\tilde I_B$, and thus for each $x\in L^0({\mathcal F},\mathbb R^n)$ we have $T(\tilde I_A x)=\tilde I_B T(x)$, specially $T(\tilde I_A e_1)=\tilde I_B T(e_1)=\tilde I_B e_1$, where $e_1=(1,0,\dots,0)$, it follows that $\tilde I_AT(\tilde I_A e_1)=0\neq \tilde I_Ae_1=\tilde I_AT(e_1)$. Thus $T$ is not stable.
\end{example}

\begin{remark}
 Since $L^0({\mathcal F})$ is a commutative algebra and $L^0({\mathcal F})\neq \{0\}$, according to Theorem 2.6 of \cite{Cohn}, $L^0({\mathcal F})$ is an IB-ring (see \cite{LK} for details). Lashkhi and Kvirikashvili \cite{LK} has established fundamental theorem of affine geometry of modules over IB-rings. It is necessary to give a comparison. Applying Theorem 1 of \cite{LK} to regular $L^0({\mathcal F})$-modules gives the following fact: let $V$ and $V^\prime$ be two regular free $L^0({\mathcal F})$-modules such that $rank(V)\geq 2$, if $T: V\to V^\prime$ with $T(0)=0$ is a collineation preserving parallelism, that is to say, $T$ is a bijection such that the images of collinear points under $T$ are themselves collinear and $T$ preserves parallelism (see \cite{LK} for this notion), then there exists an isomorphism $\sigma: L^0({\mathcal F})\to L^0({\mathcal F})$ such that $T$ is a
$\sigma$-semilinear isomorphism, namely $T(x+y)=T(x)+T(y)$ for any $x$ and $y$ in $V$ and $T(\xi x)=\sigma(\xi)T(x)$ for any $x\in V$ and $\xi\in L^0({\mathcal F})$. In this case, Theorem 1 of \cite{LK} requires that $V$ and $V^\prime$ be free and $T$ be bijective and parallelism preserving, whereas our Proposition \ref{line-line} only requires a simple condition--$T$ being stable, therefore our Proposition \ref{line-line} is not a special case of Theorem 1 of \cite{LK}. We also would like to point out that although $T$ in Example \ref{exam} is a $\sigma$-semilinear isomorphism, it is not an $L^0$-linear mapping.
\end{remark}

With Proposition \ref{line-line}, to prove Theorem \ref{main} we only need to show that a stable and bijective mapping, which maps each $L^0$-line segment to an $L^0$-line segment, must maps each $L^0$-line to an $L^0$-line.

\begin{proposition}\label{line-seg}
  Let $V$ and $V^\prime$ be two regular $L^0({\mathcal F})$-modules such that $V$ contains an element $e$ with full support. If $T: V\to V^\prime$ is bijective, stable and maps each $L^0$-line segment to an $L^0$-line segment, then $T$ maps each $L^0$-line to an $L^0$-line.
\end{proposition}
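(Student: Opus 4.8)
The plan is to prove the single inclusion $T(l(x,y))\subseteq l(T(x),T(y))$ for every stable, injective, segment-preserving $T$ and every pair $x,y\in V$, and then to obtain the reverse inclusion by symmetry. Once this inclusion is established in the stated generality, I would apply it to $T^{-1}$, which is again injective, stable (by (5) of Proposition \ref{aff-lc}) and segment-preserving (since $T([a,b])=[T(a),T(b)]$ for all $a,b$ forces $T^{-1}([u,v])=[T^{-1}(u),T^{-1}(v)]$ by bijectivity); applying the inclusion to $T^{-1}$ at the points $T(x),T(y)$ and then pushing forward by $T$ yields $l(T(x),T(y))\subseteq T(l(x,y))$. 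Hence $T(l(x,y))=l(T(x),T(y))$, which exhibits the image of $l(x,y)$ as an $L^0$-line. Note that only injectivity is needed for the inclusion itself; bijectivity enters solely through the passage to $T^{-1}$.

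The technical backbone is a countable form of stability. Writing each point of $l(x,y)$ as $p_\mu:=\mu x+(1-\mu)y$ with $\mu\in L^0({\mathcal F})$, I would repeatedly use that if $\{A_n\}$ is a countable partition and $\tilde I_{A_n}p_\mu=\tilde I_{A_n}p_{\mu_n}$, then the local property (equivalent to stability by (4) of Proposition \ref{aff-lc}) gives $\tilde I_{A_n}T(p_\mu)=\tilde I_{A_n}T(\tilde I_{A_n}p_\mu)=\tilde I_{A_n}T(p_{\mu_n})$ for each $n$. Combined with the fact that $L^0({\mathcal F})$ is closed under countable gluing along partitions (any sequence $\nu_n$ yields a unique $\nu$ with $\tilde I_{A_n}\nu=\tilde I_{A_n}\nu_n$) and with the regularity of $V^\prime$, this lets me fuse countably many locally defined target scalars into a single $\nu\in L^0({\mathcal F})$ and conclude $T(p_\mu)=\nu T(x)+(1-\nu)T(y)$ from its restrictions. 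A preliminary reduction splits $\Omega$ along $A:=[x\neq y]$: on $A^c$ one has $\tilde I_{A^c}p_\mu=\tilde I_{A^c}x$, so locality makes this part of the line collapse to $\tilde I_{A^c}T(x)$, which already lies on $l(T(x),T(y))$ with local scalar $1$.

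On $A$ I would partition according to the position of the scalar into $[\mu\le 0]$, $[0\le\mu\le 1]$ and $[\mu\ge 1]$. On $[0\le\mu\le1]$ the point $p_\mu$ lies in $[x,y]$, so $T(p_\mu)\in[T(x),T(y)]\subseteq l(T(x),T(y))$ directly. On $[\mu\ge1]$ I would replace $\mu$ by $\mu^\prime:=\tilde I_{[\mu\ge1]}\mu+\tilde I_{[\mu<1]}$, which is $\ge 1$ hence invertible, and use the betweenness relation $x=(\mu^\prime)^{-1}p_{\mu^\prime}+(1-(\mu^\prime)^{-1})y$ whose coefficient $(\mu^\prime)^{-1}$ lies in $(0,1]$; segment-preservation gives $T(x)=\alpha T(p_{\mu^\prime})+(1-\alpha)T(y)$ for some $\alpha\in L^0({\mathcal F})$ with $0\le\alpha\le1$, and inverting $\alpha$ expresses $\tilde I_{[\mu\ge1]}T(p_\mu)$ as a local restriction of a point of $l(T(x),T(y))$. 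The case $[\mu\le0]$ is symmetric, with $y$ as the intermediate point. The one genuine subtlety is the invertibility of $\alpha$: on $[\alpha=0]$ one would have $T(x)=T(y)$ locally, whence $\tilde I_{[\alpha=0]}x=\tilde I_{[\alpha=0]}y$ by injectivity and stability, so $[\alpha=0]\subseteq[x=y]$ and $\alpha$ is invertible exactly on $A$, where it is used.

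The main obstacle is structural rather than computational: because an $L^0$-line is not determined by two of its points and because $\mu$ may be unbounded, one can neither exhaust $l(x,y)$ by finitely many sub-segments nor name two target points in advance and expect a single ordinary line to contain every image. The resolution is the interplay just described: the extrapolation coefficients produced by the betweenness relations are automatically valued in $[0,1]$ no matter how large $\mu$ is, so no boundedness is lost, and the countable-gluing/regularity mechanism then fuses the locally defined target scalars into one global scalar. Making the support bookkeeping around the invertibility of the various coefficients precise, and verifying that the glued scalar reproduces $T(p_\mu)$ on every piece of the partition, is where the care will be required.
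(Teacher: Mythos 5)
Your proposal is correct, and it reaches the conclusion by a genuinely different route than the paper's own proof. The shared skeleton is the same: both you and the paper reduce to the single inclusion $T(l(x,y))\subseteq l(T(x),T(y))$ and recover the reverse inclusion by applying it to $T^{-1}$ (stable by (5) of Proposition \ref{aff-lc}, segment-preserving by bijectivity), and both extract images of points outside $[x,y]$ from a betweenness relation whose coefficient automatically lies in $[0,1]$, then invert that coefficient and glue local scalars via the local property. The execution differs substantially, though. The paper works in two stages: it first treats the integer points $x+ky$, $k\in\mathbb Z$, and for this it needs $y$ to have full support --- this is exactly where the hypothesis on $e$ enters, since a general nonzero $y$ is replaced by $y_1=\tilde I_Ay+\tilde I_{A^c}e$; it then handles an arbitrary scalar $\lambda$ through the countable partition $A_k=[k-1\le|\lambda|<k]$, placing $x+\lambda \tilde I_{A_k}y$ on the segment $[x-ky,x+ky]$ and fusing the countably many local scalars by the regularity of $V^\prime$. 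You instead partition $\Omega$ into finitely many pieces according to the position of the scalar itself and apply the reciprocal trick directly: for $\mu^\prime\ge 1$ the relation $x=(\mu^\prime)^{-1}p_{\mu^\prime}+\bigl(1-(\mu^\prime)^{-1}\bigr)y$ needs no integer approximation, and the only degeneracy, $[\alpha=0]$, is absorbed into $[x=y]$ by injectivity plus stability --- whereas the paper forces its coefficient to be strictly positive on all of $\Omega$, which is precisely what requires full support of $y$ and hence the element $e$. This buys two things: your gluing is over a finite partition, so the regularity of $V^\prime$ is not even needed at that step; and, more notably, you never use the hypothesis that $V$ contains an element of full support, so your argument proves the proposition with that assumption deleted, a genuine (if modest) strengthening. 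Two small points to tidy in a write-up: the scalar regions should be made disjoint (e.g.\ $[\mu<0]$, $[0\le\mu\le1]$, $[\mu>1]$), and on $[0\le\mu\le1]$ the assertion that ``$p_\mu$ lies in $[x,y]$'' needs the same modification device you use on $[\mu\ge1]$ (replace $\mu$ by $\tilde I_{[0\le\mu\le1]}\mu$ and pass through the local property), since $\mu$ is a single global scalar rather than a pointwise-restricted one.
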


\begin{proof}
 We can without loss of generality assume that $T(0)=0$, otherwise we make a translation. Let $x$ and $y$ be any two elements in $V$ such that $y\neq 0$. Since $T$ is a bijection, it follows from (5) of Proposition \ref{aff-lc} that $T^{-1}$ is stable, and by Proposition \ref{line-line-exact} we can see that $T^{-1}$ also maps each $L^0$-line segment to an $L^0$-line segment, thus we only need to show that each point $z$ in the $L^0$-line $l(x,x+y)=\{x+\lambda y: \lambda\in L^0({\mathcal F})\}$ will be mapped into the $L^0$-line $l(T(x),T(x+y))=\{\lambda T(x)+(1-\lambda)T(x+y): \lambda\in L^0({\mathcal F})\}$.

We first show that for each $k\in \mathbb{Z}=\{0,\pm 1,\pm 2,\dots\}$, $z=x+ky$ will be mapped into $l(T(x),T(x+y))$.

First assume that $y$ has full support. (1) The cases $k=0$ and $k=1$ are obvious. (2) Fix a $k\in\{2,3,4,\dots\}$. Since $x+y=(1-\frac{1}{k})x+\frac{1}{k}(x+ky)\in [x,x+ky]$ and $T$ maps an $L^0$-line segment to an $L^0$-line segment, there exists $\mu\in L^0({\mathcal F})$ with $0\leq \mu\leq 1$ such that $T(x+y)=(1-\mu) T(x)+\mu T(x+ky)$. Let $A=[\mu=0]$, then $I_A\mu=0$. By (3) of Proposition \ref{aff-lc}, $T(I_A(x+y))=I_AT(x+y)=I_A[(1-\mu) T(x)+\mu T(x+ky)]=I_AT(x)=T(I_Ax)$. Since $T$ is a bijection, we obtain $I_A(x+y)=I_A x$. Then $I_A=0$ follows from the assumption that $y$ has full support, equivalently $\mu>0$ on $\Omega$. As a result, $T(x+ky)=\frac{1}{\mu}T(x+y)+(1-\frac{1}{\mu})T(x)\in l(T(x), T(x+y))$. (3) Fix a $k\in\{-1,-2,-3,\dots,\}$. Since $x=\frac{1}{1-k}(x+ky)+(1-\frac{1}{1-k})(x+y)\in [x+ky, x+y]$, there exists $\mu\in L^0({\mathcal F})$ with $0\leq \mu\leq 1$ such that $T(x)=\mu T(x+ky)+(1-\mu)T(x+y)$, by a similar argument as in (2) we deduce that $\mu>0$ on $\Omega$, then $T(x+ky)=\frac{1}{\mu}T(x)+(1-\frac{1}{\mu})T(x+y)\in l(T(x), T(x+y))$.

 Now for a general nonzero $y$. Take $y_1=I_Ay+I_{A^c}e$, where $A=[y\neq 0]$, we see that $y_1$ has full support and $y=I_Ay_1$. Fix any $k\in {\mathbb Z}$, we have proved that there exists $\mu \in L^0({\mathcal F})$ such that $T(x+ky_1)=\mu T(x)+(1-\mu)T(x+y_1)$. Using (4) of Proposition \ref{aff-lc}, $T(x+ky)=T[I_A(x+ky)]+T[I_{A^c}(x+ky)]=T[I_A(x+ky_1)]+T(I_{A^c}x)=I_A[\mu T(x)+(1-\mu)T(x+y_1)]+I_{A^c}T(x)=(I_A\mu+I_{A^c})T(x)+I_A(1-\mu)T(x+y)$, implying that $T(x+ky)\in l(T(x),T(x+y))$.

We then show that for each $\lambda\in L^0({\mathcal F})$, $z=x+\lambda y$ will be mapped into $l(T(x),T(x+y))$.

For $k=1,2,\dots$, let $A_k=[k-1\leq |\lambda| <k]$, then $x+\lambda I_{A_k}y=(\frac{1}{2}-\frac{\lambda}{2k}I_{A_k})(x-ky)+(\frac{1}{2}+\frac{\lambda}{2k}I_{A_k})(x+ky)$ belongs to $[x-ky, x+ky]$, consequently $T(x+\lambda I_{A_k}y)\in [T(x-ky), T(x+ky)]\subset l(T(x),T(x+y))$, where the last inclusion follows from the fact that both the two endpoints $T(x-ky)$ and $T(x+ky)$ belong to $l(T(x),T(x+y))$. Thus for each positive integer $k$, there exists $\mu_k\in L^0({\mathcal F})$ such that $T(x+\lambda I_{A_k}y)=\mu_kT(x)+(1-\mu_k)T(x+y)$. By (3) of Proposition \ref{aff-lc}, we have that $I_{A_k}T(x+\lambda y)=I_{A_k}T(x+\lambda I_{A_k}y)=I_{A_k}[\mu_kT(x)+(1-\mu_k)T(x+y)]$ for each $k$.
Let $\mu=\sum^{\infty}_{k=1}I_{A_k}\mu_k$, then $I_{A_k}T(x+\lambda y)=I_{A_k}[\mu T(x)+(1-\mu)T(x+y)]$ for each $k$. By the regularity of $V^\prime$, we conclude that $T(x+\lambda y)=\mu T(x)+(1-\mu)T(x+y)$, which means that $T(x+\lambda y)\in l(T(x),T(x+y))$.

This completes the proof.
\end{proof}

We can now prove Theorem \ref{main}. \\
\begin{proof}
It immediately follows from Propositions \ref{line-line} and \ref{line-seg}.

This completes the proof.
\end{proof}
\section*{Acknowledgements}
The first author was supported by the Natural Science Foundation of China (Grant No.11701531) and the Fundamental Research Funds for the Central Universities, China University of Geosciences (Wuhan) (Grant No. CUGL170820). The second author was supported by the Natural Science Foundation of China(Grant No.11971483). The third author was supported by the Natural Science Foundation of China(Grant No.11501580). The authors would like to thank the reviewers for their valuable suggestions which considerably improve the readability of this paper.


\end{document}